\documentclass[11pt]{amsart}
\usepackage{geometry}                
\geometry{letterpaper}                   
\usepackage[parfill]{parskip}    
\usepackage{graphicx}
\usepackage{amsthm}
\usepackage{amsmath}
\usepackage{amssymb}
\usepackage{epstopdf}
\usepackage{enumerate}
\usepackage{graphicx}
\usepackage{hyperref}
\usepackage[utf8]{inputenc}

\DeclareGraphicsRule{.tif}{png}{.png}{`convert #1 `dirname #1`/`basename #1 .tif`.png}

\title[A new trick for Santalo's proof of the strong Bonnesen inequality]
{A new trick for Santalo's integral geometric proof of the strong Bonnesen inequality}

\author{Michael E. Gage}
\thanks{I wish to thank John Harper for encouragement and support. Much of the material in this article is based on the Gehman Lecture I delivered to the Seaway Section meeting of the MAA in April 2008}
\date{\today}                                           
\dedicatory{In memory of Robert Osserman and Eugenio Calabi}

\newcommand{\p}{\partial}

\newcommand{\la}{\left <}
\newcommand{\ra}{\right >}
\newcommand{\rin}{r_{\text{in}}}
\newcommand{\rout}{r_{\text{out}}}
\newcommand{\rhoin}{\rho_{\text{in}}}
\newcommand{\rhoout}{\rho_{\text{out}}}
\theoremstyle{plain}
\newtheorem{theorem}{Theorem}

\newtheorem{lemma}{Lemma}

\theoremstyle{definition}
\newtheorem{definition}{Definition}

\theoremstyle{remark}

\begin{document}
{ current version \today}

\maketitle
\section{Introduction}
The goal in this  paper is to give a review of the isoperimetric-type inequalities surrounding the proof that the isoperimetric ratio $$\frac{L^{2}}{A}\label{E:isoperimetricratio}$$ decreases to its minimum $4\pi$ under the curve shortening flow 
\begin{equation}
X_{t}=kN\label{E:curveshortening}
\end{equation} and  to make one novel improvement to the proof of the Santalo-Bonnesen theorem.  (see \cite{Gage1983} or {\cite{Epstein-Gage1987} for background on the curve shortening problem.)
\begin{itemize}
\item We define the Bonnesen functional $B(r)$ and emphasize  its importance in deriving other isoperimetric  inequalities  including an integral bound on the support function $h$ 
\begin{equation}
\int_{\p K }h^{2}ds \le \frac{LA}{\pi}
\end{equation}
which is  the key formula used to prove that the  curvature flow decreases the isoperimetric ratio.  

\item We  quickly review the Santalo proof of the Bonnesen inequality via integral geometry  which places a lower bound on the isoperimetric deficit 	
$(L^{2}-4\pi A)\ge \pi^{2}(\rout-\rin)^2$	
\item We define the concept of a positive center, first introduced in \cite{Gage1990}
\item We then recall the minimal annulus theorem for a convex closed curve. 
\item We review the earlier proofs   that the center of the minimal annulus is a positive center using  "cut and flip" symmetrization techniques.
\item We  give a detailed proof of the ``averaging trick" which allows us to  directly extend the Santalo integral geometric argument to prove that $B(r)\ge 0$ on the interval $[\rho_{in},\rho_{out}]$ avoiding the symmetrization techniques.
I have not seen this ``averaging trick"   used previously in the context of the Santalo-Bonnesen theorem.  
\end{itemize}

In the second section of this paper we follow essentially the same outline to prove analogs of these results for Minkowski geometries.

\subsection{The centrality of the Bonnesen functional}

\begin{itemize}
\item We define the Bonnesen functional for a smooth closed curve $\p K$ bounding a lamina $K$ in the plane as
$$ B(r):= -\pi r^{2} + Lr-A$$ where $L$ is the perimeter of the curve, $A$ is the area of the lamina  and $r$ is, for now, an arbitrary value.  This formula is sometimes known as the extended Bonnesen inequality since  both the isoperimetric inequality and the Bonnesen inequality can be magically derived by examining the interval on which $B(r)\ge0$.  (See \cite{Osserman1979} for an extended account of the Bonnesen inequality and its relatives.)

\item If $B(r)$ is positive for any  $r$ then the quadratic has real roots and the discriminant $L^{2} - 4\pi A$ is non-negative, which is exactly the isoperimetric inequality.

\item We define the inradius and the outradius of a closed convex curve:  the inradius $\rin$ is the radius of the largest circle contained within the lamina bounded by the curve.  The outradius $\rout$ is the radius of the smallest circle containing the lamina. 

\item We will prove that $B(\rin)\ge 0$ and $B(\rout)\ge0$ therefore because  $B(r)$ quadratic is concave down, $B(r)\ge 0$ for all $r$ in the interval $[\rin, \rout]$. 

\item\label{T:bonnesenInequalityIntro}
This also means that the interval  $[\rin, \rout]$ is trapped between the roots of $B$. These roots can be determined using the quadratic formula hence $\rin\geq \frac{L}{2\pi}-\frac{\sqrt{L^{2}-4\pi A}}{2\pi}$ and $\rout\leq \frac{L}{2\pi}+\frac{\sqrt{L^{2}-4\pi A}}{2\pi}$. 
which after subtracting and simplifying implies the standard  \textbf{\ Bonnesen inequality} for the isoperimetric deficit $$L^{2}-4\pi A \geq \pi^{2} (\rout -\rin)^{2}  $$  This also implies that if the isoperimetric deficit is zero   then the curve is a circle.  (See Theorem \ref{T:BonnesenInequality}) for the justification that $B(\rin)$ and $B(\rout)$ are non-negative.)

\item 
Using the minimal width annulus theorem and ``averaging trick"  in this paper we can show that $B(r)$ is positive on the larger interval  $p\in[\rho_{in}, \rho_{out}]$ where $\rho_{in}$ is the radius of the inner circle of the annulus and $\rho_{out}$ is the outer radius.     This implies  a stronger version of the Bonnesen inequality for the isoperimetric deficit $L^{2}-4\pi A \ge \pi^{2}(\rho_{out} -\rho_{in})^{2}$and also implies that the center of the minimal width annulus is a positive center for the curve $\p K$.  I believe the ``averaging trick"  is the one new result in this paper. Although it is a simple idea I have not seen it used elsewhere in the context of proving the Santalo-Bonnesen theorem.
\end{itemize}

\subsection{Definitions and basic formulas}

The support function $h$ of the curve is defined by $h(s)=<X(s), -N(s)>$ where $X$ is the position vector of the curve and $N$ is the inward pointing  unit normal vector. The vector $X$ and the support function $h$ are based at the origin. The unit tangent vector is denoted $T$. 
The area enclosed by the curve can be derived from the Green's theorem and is given by
\begin{equation}
A=\frac12 \int_{0}^{L} (xy'-yx')ds = \frac12 \int_{0} ^{L} \la X, -N\ra\,ds=\frac12 \int_{0}^{L} h ds
\end{equation}

Using $X'(s)=T(s)$ and $T'(s)=\kappa N(s)$ where $\kappa$ is the curvature  we obtain
\begin{equation}
\int _{\p K} h\kappa ds = -\int_{0}^{L}\la X, \kappa N\ra ds = - \int_{0}^{L} \la X, X''\ra ds = -\la X, X'\ra  \left |_{0}^{L} \right.+\int _{0}^{L} \la X', X'\ra ds = L
\end{equation}

\subsection{the derivative of the isoperimetric ratio} 

A calculation (\cite{Gage1983} and elsewhere) shows that the derivative of the isoperimetric ratio is given by 
\begin{equation}\label{derivativeOfIsoperimetricRatio}
\left(\frac{L^{2}}{A}\right)_{t} = -2 \frac{L}{A}\left(\int_{\p K}\kappa^{2}ds-\pi \frac{L}{A}\right).
\end{equation}
In order to show that this is negative we first observe that
\begin{equation}
L=\int_{\p K} h\kappa ds \le \left( \int_{\p K}h^{2}ds\right)^{1/2}\left(\int _{\p K}\kappa^{2}ds \right)^{1/2},\label{E:intkh}
\end{equation}

If we can show that 
\begin{equation}\label{E:supportsqint}
\int_{\p K}h^{2}ds \leq \frac{LA}{\pi}
\end{equation} 
then substituting that result into the equation \eqref{E:intkh} above and simplifying we obtain that the derivative of the isoperimetric ratio is negative.
\begin{align*}
L^2&\le \int \kappa^{2}\,ds \int h^{2}\,ds\le \frac{LA}{\pi}\int \kappa^{2}\\ 
\pi \frac{L}{A}&\le \int\kappa^{2}\,ds
\end{align*}

\subsection{Positive center\label{S:positivecenter}}
We hope to find a point $\mathcal{O}$ to use as the origin so that the support function $h(s)$ of the curve from that point satisfies 
\begin{equation}
B(h(s)) = - \pi h(s)^{2} +L h(s) -A\geq 0 \text{  for all $s$} 
\end{equation}   
where $s$ is the arc length parameter along the curve.
\begin{definition}The {\bf positive center} of a curve is a point where the support function based at that point satisfies $B(h(s)))\ge 0$ for all points on the curve
\end{definition}
Basing the support function at a positive center  and integrating $B(h)$ over the curve yields
\[
0\leq \int_{\p K} -\pi h^{2} + Lh -A ds = -\pi \int h^{2}ds +2LA -AL
\]
which simplifies to 
\begin{equation}
\int h^{2}ds \leq \frac{LA}{\pi}
\end{equation}

 It turns out that this inequality has a number of consequences and in particular it is the basis of the  proof that the isoperimetric ratio decreases under the curve shortening flow. 
 
 It is easy to find a positive center for a curve $\p K$ which is symmetric through the origin since the origin itself is the positive center and the inradius circle and the outradius circle centered at the origin define the minimal annulus which surrounds the curve.  For any curve in that annulus $\rin \leq h(s) \leq \rout $  the original Santalo-Bonnesen proof shows that $B(h(s))\geq 0$ for all $s \in \p K$
 
 If the curve is not symmetric through the origin there are several ways  to proceed.  
 \begin{itemize}
 \item
 The first approach [Gage1983] was to use the intermediate value theorem to find a line  which splits the area of the lamina in half and for which the tangents where the line segment meets the curve are anti-parallel. The line defines two figures, each in a half plane, and by considering them separately, doubling each figure by flipping the figures around the center of the line segment we obtain two origin symmetric convex figures with $C^{1}$ boundary for which
 \begin{equation}
 \int_{left} h^{2}\leq \frac{L_{left}A}{\pi}\qquad  \int_{right} h^{2}\leq \frac{L_{right}A}{\pi}
 \end{equation}
 These inequalities can then be added to prove \eqref{E:supportsqint} for the original curve. This proves that the isoperimetric ratio for curves symmetric through the origin is non-increasing. This method doesn't necessarily identify a positive center since the inradius and outradius of the left and right bodies might be different.
 
 \item A more detailed analysis in [Gage1990]  of this procedure  shows that the center of the minimal width annulus enclosing the lamina is a positive center  using a more elaborate cut, flip and paste technique.    Specifically if $\mathcal{O}$ is the center and $\rho_{in}$ is the radius of the inner boundary of the annulus, $\rho_{out}$ is the radius of the outer boundary, then $B(\rho_{in})$ and $B(\rho_{out})$ are both positive and therefore, by convexity of  $B$, $B(r)\ge0$ on $[\rho_{in}, \rho_{out}]$, i.e. $B(h(s))\ge0$ for all $s$ in the curve $\p K$ where $h$ is the support function based at the positive center hence inequality \eqref{E:supportsqint}  holds. 
 Inequality \eqref{E:supportsqint} and \eqref{derivativeOfIsoperimetricRatio} imply that the isoperimetric ratio is non-increasing. More is true: using \cite{Gage1984} and \cite{Angenent1991}one can show that the solution to \eqref{E:curveshortening} exists  on a time interval $0\le t< T$ with $\lim_{t\to T} A=0$. and  that the isoperimetric ratio decreases to its minimum $4\pi$.  Using Bonnesen's original inequality this proves that the curve, when normalized to have unit area, converges to the boundary of the unit disk. 
 
The positivity of $B$ also implies a strong Bonnesen inequality  
$$L^2-4\pi A \ge \pi^2(\rho_{out} -\rho_{in})^2$$
 an inequality, which Bonnesen \cite{Bonnesen1921}  proved using angular symmetrization.

\cite{Gage1990} also showed that standard ``centers'' such as the center of mass, the Steiner point, the centroid of the boundary and the minimizer of $\int h^{2}ds$ are not necessarily positive centers.
 
 \item An extension of this procedure to Minkowski space (where the unit disk is replaced by any strictly convex set symmetric through the origin) is described in [Gage1993] \cite{Peri-Wills-Zucco1993}and elsewhere. 
 \end{itemize}
  
\subsection{Extended  Santalo-Bonnesen integral geometry proof of the isoperimetric inequality}

 There is a beautiful proof of the isoperimetric inequality in the plane using integral geometry which appears in Santalo's book \cite{Santalo1953,Santalo1967} and can now also be found in many other places. 

Integral geometry, or geometric probability, defines a canonical measure on the space of lines, or the space of bodies in the plane.  Averaging intersection numbers of a body chosen at random according to this measure and a fixed body yields geometric properties of the two bodies.  

\subsection{Integral geometric measure}
\begin{itemize}
\item Integral geometric measure for lines: Define a line using polar coordinates let $\theta$ be the angle of the shortest distance to the line from the origin and $r$ the distance to the line. Then $drd\theta$ defines a measure on lines which is invariant under Euclidean motions.  
\item Integral geometric measure for solid bodies: Attach an orthonormal 2-frame to the body and define the base point of the frame using  coordinates $x$ and $y$ and the rotation of the frame by $\theta$.  Then the measure $dx dy d\theta$ defines a measure which is invariant under Euclidean motions.
\end{itemize}

\subsection{Crofton's formula for the length of the boundary of a convex lamina in terms of weighted integral geometric measure of its intersection  with a lines in the plane.}
 
 $$2L=\int n dr d\theta$$
 In words: twice the length of the  boundary is equal to the expected  number of boundary intersections with random lines using the integral geometric measure on lines in the plane.   $n$ is the number of intersections of a given random line with the boundary. 
 	
\subsection{Poincare formula:\label{S:poincare}}
$$ 4L_{1}L_{2} = \int n dx\,dy\,d\theta$$
In words: If the first convex body is fixed  and a second convex body is positioned at random with respect to integral geometric measure then  the expected number of intersections of their boundaries is 4 times the product of the length of their boundaries.  Notice that in this case the $n$ is an even number (perhaps zero) almost always.  
 
 \subsection{ Blaschke formula:\label{S:blaschke}}
 
 $$\int  \nu dx\,dy\,d\theta= 2\pi (A_{1}+ A_{2}) +L_{1}L_{2}$$
 where $\nu$ is the number of components in the intersection of the interiors of the two bodies.  Again the first body is fixed and the second (moving) body is distributed according to geometric measure. Since we are dealing with convex objects in this paper $\nu$ is always $1$ or $0$. 
 
 We will use most often the special case where the moving body is a Euclidean disk (or later the centrally symmetric unit disk of a Minkowski geometry.  We first consider a disk whose radius $r$ is between $r_{in}$ the inradius, or radius of the largest disk contained in the stationary  lamina surrounded by the curve $\p K$ and $r_{out}$, the outradius of the stationary body.  Then it is clear geometrically for $r$ in this interval that $n-2\nu \ge 0$ for almost all positions since  if the disk touches the convex lamina then it must intersect the boundary curve, generically in an even number of points.  Tangential touches have lower dimensionality and are of measure zero for integral geometric measure.  

 \subsection{Determining the interval on which the Bonnesen function is positive}
 Subtracting the Blaschke formula from the Poincare formula and assuming that $A_{2}$ is the moving Euclidean disk's area  and $A_{1}$ is the area of the lamina $K$ interior to the curve $\p K$ implies that 
 $$ \int d\theta \int (n -2\nu) dx dy  = -4\pi A_{1}-4\pi A_{2}+2L_{1}L_{2}$$ 

 The symmetry of the Euclidean disk means that the rotational motion of the geometric measure doesn't change the integrand and that $d\theta$ can be factored out.  
 For a disk of radius $r$ we have $A_{2}= \pi r^{2}$ and $L_{2}=2\pi r$. Substituting all this in the formula above and dividing by $4\pi$ to simplify the coefficients we obtain the defining formula for  the (Euclidean) Bonnesen functional $B(r)$ for a convex closed curve $\p K$  bounding a lamina $K$
\begin{equation}
B(r)=\int \left(\frac{n}{2} - \nu \right) dx dy = rL_{1}-A_{1}-\pi r^{2}\tag{Bonnesen functional}
\end{equation}
\begin{theorem}[The Bonnesen inequality]
\label {T:BonnesenInequality}
$B(r)\ge 0$ for all $r\in [\rin,\rout]$

As explained in the introduction \ref{T:bonnesenInequalityIntro} it follows easily from this that $L^{2}-4\pi A \ge \pi^{2}(\rout - \rin)^{2}$.
\end{theorem}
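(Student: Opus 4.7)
The plan is to leverage the integral-geometric representation
\[
B(r)=\int\!\left(\frac{n}{2}-\nu\right)dx\,dy
\]
that was just derived by subtracting Blaschke's formula from Poincar\'e's formula (with the $d\theta$ factored out by rotational symmetry of the disk). Once this identity is in hand, the entire theorem reduces to showing that the integrand $n/2-\nu$ is non-negative for almost every placement $(x,y)$ of a disk $D_r$ of radius $r$, whenever $r\in[\rin,\rout]$. Since $\nu\in\{0,1\}$ for convex bodies, it is enough to verify $n\ge 2\nu$, i.e.\ $n\ge 2$ whenever $\nu=1$.

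The core of the argument is a short case analysis on the placement of $D_r$. If $\nu(D_r,K)=0$, there is nothing to prove, because $n\ge 0$ trivially. If $\nu=1$, the interiors of $D_r$ and $K$ overlap, and I use the definitions of inradius and outradius to rule out containment in either direction: since $r\ge\rin$ and the disk is strictly larger than the largest disk contained in $K$, one has $D_r\not\subset K$; since $r\le\rout$ and $K$ is not contained in any disk smaller than $\rout$, one has $K\not\subset D_r$. For two convex bodies whose interiors meet but neither of which contains the other, the boundaries must cross in at least two points, so $n\ge 2$. Hence $n/2-\nu\ge 0$ pointwise off of a negligible set.

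I would then address the measure-zero issues carefully: tangential contacts (where $D_r$ meets $\p K$ without transverse crossing) form a lower-dimensional locus in the $(x,y)$-plane, and the borderline placements at $r=\rin$ (a unique position where $D_r\subset K$) or at $r=\rout$ (a unique position where $K\subset D_r$) likewise have measure zero. These exceptional placements do not contribute to the Lebesgue integral defining $B(r)$, so the pointwise inequality $n/2-\nu\ge 0$ almost everywhere yields $B(r)\ge 0$, which is the statement of the theorem. The final sentence of the theorem, deducing the classical Bonnesen inequality $L^2-4\pi A\ge\pi^2(\rout-\rin)^2$, is then exactly the quadratic-formula argument already sketched in the introduction.

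The main obstacle is really conceptual bookkeeping rather than computation: one must check that ``neither body contains the other, but their interiors intersect'' genuinely forces $n\ge 2$ for \emph{smooth} convex boundaries (which is where the convexity hypothesis is used in an essential way, ruling out odd intersection numbers generically), and one must justify discarding the tangential and extremal configurations as a null set for the $dx\,dy\,d\theta$ measure. Once these two points are addressed cleanly, the theorem follows immediately from positivity of the integrand.
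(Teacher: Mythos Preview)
Your proposal is correct and follows essentially the same approach as the paper: both use the integral-geometric identity $B(r)=\int(\frac{n}{2}-\nu)\,dx\,dy$ and argue that for $r\in[\rin,\rout]$ the moving disk can neither contain nor be contained in the lamina, so whenever $\nu=1$ one has $n\ge 2$ generically, making the integrand non-negative almost everywhere. Your write-up is somewhat more explicit about the measure-zero bookkeeping (tangencies, the extremal placements at $r=\rin$ and $r=\rout$), but the argument is Santal\'o's, exactly as in the paper.
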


\begin{proof}
 If the moving disk has radius $r$ with $r\in[\rin,\rout]$ then the disk can neither contain the lamina bounded by the curve nor be contained in that lamina.  Hence, except for positions of measure $0$, every time the disk touches the lamina ($\nu=1$) the boundary of the disk must intersect the boundary of the lamina (generically) in two or more places.  This implies that the integrand is always non-negative and that $B(r)\ge0$ for $r\in[\rin,\rout]$.
 This is Santalo's proof that the Bonnesen functional $B(r)$ is greater than or equal to $0$  when $r\in  [r_{in},r_{out}]$ and that equality occurs only for the circle. 
 \end{proof}
 
 Our proof of the strengthened version of this inequality  will show that $B(r)\ge 0$  for $r\in [\rho_{in},\rho_{out}]$, the range between the inner radius of the minimal annulus containing the curve $\p K$ and the outer radious of the annulus, even though the integrand is sometimes negative, and that therefore the center of the minimal annulus is a positive center, that is $B(h(s))\ge0$ for all $s\in\p K$.  
 
 It does this by balancing the measure of the positions where the disk of radius $\rho_{in}$ has intersection numbers $n=0$ and $\nu =1$), (i.e. the disk is completely contained in the fixed body),  with the positions where $n\ge 4$ and $\nu=1$.  This implies that the integral is positive, even though the integrand is sometimes negative, and $B(\rho_{in})\ge 0$.  To the best of my knowledge this balancing approach has not previously been used in this context.
 We then do a similar analysis with the disk of radius $\rho_{out}$ to prove that $B(\rho_{out})\ge0$. Since $B$ is convex downward this completes the proof that $B(h)\ge 0$ on $[\rho_{in}, \rho_{out}]$. 
  
 \subsection{The minimal annulus of a convex  curve (equivalently of its enclosed lamina) in the plane\label{S:minimalannulus}}
 
 For any simple convex closed curve  there exists a unique minimum  width annulus and this annulus will have at least 4 boundary contacts between the inner and outer boundaries of the minimal annulus and the closed curve, with the contact points alternating between the inner and outer boundaries.   This is proved in [Bonnesen 1929].

 \begin{theorem}[Minimal annulus theorem]\label{T:minimalannulus} let $K$ be the lamina interior to the convex curve and let $\mathcal{T}$ be the unit Euclidean disk. We can find a translation and  homotheties of $\mathcal{T}$  so that   
 $$\rho_{in} \mathcal{T} \subseteq K \subseteq \rho_{out} \mathcal{T}$$
  and $\rho_{out}-\rho_{in}$ is minimized.  One can prove that in that  case  there are points $a, A, b, B$ on the boundary of $K$ which alternately touch tangentially $\rho_{in}\mathcal{T} $
 ( at $a$ and $b$) and $\rho_{out}\mathcal{T}$ (at $A$ and $B$).  In addition $\rhoin\le h \le \rhoout$.
 \end{theorem}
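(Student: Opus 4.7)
The plan is to cast the problem as the minimization of a convex continuous function on a compact set and then read off the alternating contact structure from a first-order optimality condition. For each candidate center $\mathcal{O}\in K$, set
\[
r(\mathcal{O}) = \min_{p\in\p K}|p-\mathcal{O}|,\qquad R(\mathcal{O}) = \max_{p\in\p K}|p-\mathcal{O}|,
\]
so that the tightest concentric annulus centered at $\mathcal{O}$ has inner radius $r(\mathcal{O})$ and outer radius $R(\mathcal{O})$. Writing $K$ as the intersection of its supporting halfplanes exhibits $r$ as a minimum of affine functions on $K$, hence concave, while $R$ is manifestly convex (a maximum of convex functions). Thus $W:=R-r$ is continuous and convex on the compact set $K$ and attains its minimum at some $\mathcal{O}^*$; set $\rhoin=r(\mathcal{O}^*)$ and $\rhoout=R(\mathcal{O}^*)$.

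The heart of the argument is the alternating contact claim. Let $\mathcal{E}=\{p\in\p K:|p-\mathcal{O}^*|=\rhoout\}$ and $\mathcal{I}=\{p\in\p K:|p-\mathcal{O}^*|=\rhoin\}$, and form the unit-direction sets
\[
U=\{(p-\mathcal{O}^*)/\rhoout:p\in\mathcal{E}\},\qquad V=\{(p-\mathcal{O}^*)/\rhoin:p\in\mathcal{I}\}
\]
on the unit circle. A perturbation $\mathcal{O}^*+\epsilon w$ changes $|p-\mathcal{O}^*|$ to first order by $-\epsilon\,u\cdot w$, where $u$ is the unit vector from $\mathcal{O}^*$ to $p$, so by an envelope (Danskin) argument the one-sided directional derivative of $W$ at $\mathcal{O}^*$ in direction $w$ is
\[
W'(\mathcal{O}^*;w) = \max_{u\in U}(-u\cdot w) + \max_{v\in V}(v\cdot w).
\]
Optimality forces $W'(\mathcal{O}^*;w)\ge 0$ for every $w\in\mathbb{R}^{2}$, which is equivalent, via the support-function identity $h_{A}+h_{B}=h_{A+B}$, to $0\in -\mathrm{conv}(U)+\mathrm{conv}(V)$, i.e.\ $\mathrm{conv}(U)\cap\mathrm{conv}(V)\neq\emptyset$. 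Two convex subsets of the unit circle in $\mathbb{R}^{2}$ can meet only if their extreme directions interleave around the origin; generically this forces at least two elements each of $U$ and $V$ arranged cyclically as outer, inner, outer, inner, which furnishes the four points $A,a,B,b$ of the statement.

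Finally, place the origin at $\mathcal{O}^*$. The tangent line to $\p K$ at $X(s)$ is a support line of $K$ at signed distance $h(s)=\la X(s),-N(s)\ra$ from the origin. Since $K\subseteq\rhoout\mathcal{T}$, $h(s)\le|X(s)|\le\rhoout$; since $\rhoin\mathcal{T}\subseteq K$ the open disk $\rhoin\mathcal{T}$ lies on the $K$-side of the support line, forcing that line to remain at distance at least $\rhoin$ from the origin, so $h(s)\ge\rhoin$.

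The main obstacle is the nonsmoothness at the variational step: the contact sets $U$ and $V$ need not be finite (e.g.\ when $\p K$ shares an arc with either bounding circle), so the optimality condition has to be phrased in terms of the convex hulls $\mathrm{conv}(U)$ and $\mathrm{conv}(V)$ rather than a finite list of points. One must also rule out or treat separately the degenerate cases in which the contact directions fail to interleave; these force $\p K$ to coincide locally with an arc of one of the bounding circles, from which the four alternating labeled contacts $a,A,b,B$ can still be extracted.
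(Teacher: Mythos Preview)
Your argument is essentially sound and reaches the same conclusion, but by a genuinely different route than the paper. The paper does not carry out a direct variational analysis; instead it reformulates the problem as a Chebyshev approximation problem for the support function. Translating the center by $(a,b)$ replaces $h(\theta)$ by $h(\theta)+a\cos\theta+b\sin\theta$, and the width of the tightest concentric annulus at that center is exactly the oscillation (range) of this function. Minimizing the oscillation over $(a,b)$ is the $L_\infty$ approximation of $h$ by the two-dimensional Chebyshev system $\{\cos\theta,\sin\theta\}$, and the classical alternation theorem (as in Lorentz, \emph{Approximation of Functions}) immediately yields four points at which the extremal deviation is attained with alternating sign --- these are precisely the alternating inner/outer contacts $a,A,b,B$. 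The paper simply cites \cite{Gage1993} and \cite{Lorentz1966} for this.

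Your approach trades the appeal to Chebyshev theory for a self-contained convex-analysis argument: convexity of $W=R-r$, Danskin for the one-sided derivative, and the geometric fact that two disjoint closed subsets $U,V$ of the unit circle with $\mathrm{conv}(U)\cap\mathrm{conv}(V)\neq\emptyset$ must interleave. This is correct, and in fact it is essentially how one \emph{proves} the Chebyshev alternation theorem in the first place, so the two arguments are close cousins. The paper's formulation has the advantage that it generalizes to the Minkowski case with no extra work (replace $h$ by $h/\tilde h$; $\{\cos\theta/\tilde h,\sin\theta/\tilde h\}$ is still a Chebyshev system), and the alternation count comes packaged. Your route is more elementary but, as you note, requires separate handling of the nonsmooth cases where $U$ or $V$ contains an arc, and you should state more carefully why $|U|\ge 2$ and $|V|\ge 2$ (a single point of the circle lies in $\mathrm{conv}(V)$ only if it lies in $V$ itself, which is excluded once $\rhoin<\rhoout$). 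With those details tightened, the argument is complete.
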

\begin{proof}
In  \cite{Gage1993}this theorem is proved by using   approximation theory  applied to the support function. 
It is clear that  the existence of a minimal Euclidean annulus is equivalent to the $L_{0}$ approximation problem of  finding $a$ and $b$ so that the range of 
 $$h(\theta)+ a\cos \theta+ b\sin \theta$$
 is minimized. 
 
 See \cite{Gage1993} Lemma 5.2 for details for completing the proof along with references to G. G. Lorentz's book ``Approximation of Functions'' \cite{Lorentz1966} particularly pages 18-27 on Cheyshev systems of functions. ( Bonnesen's work included a lot of approximation theory and he was probably aware of this connection with geometry.  )
\end{proof}
 At this point our proof diverges from the proof in \cite{Gage1993} which uses cut and paste symmetrization techniques to prove that the center of the minimal annulus is a positive center.  

\newpage

 \begin{figure}[h]
\begin{center}
\includegraphics[scale=0.5]{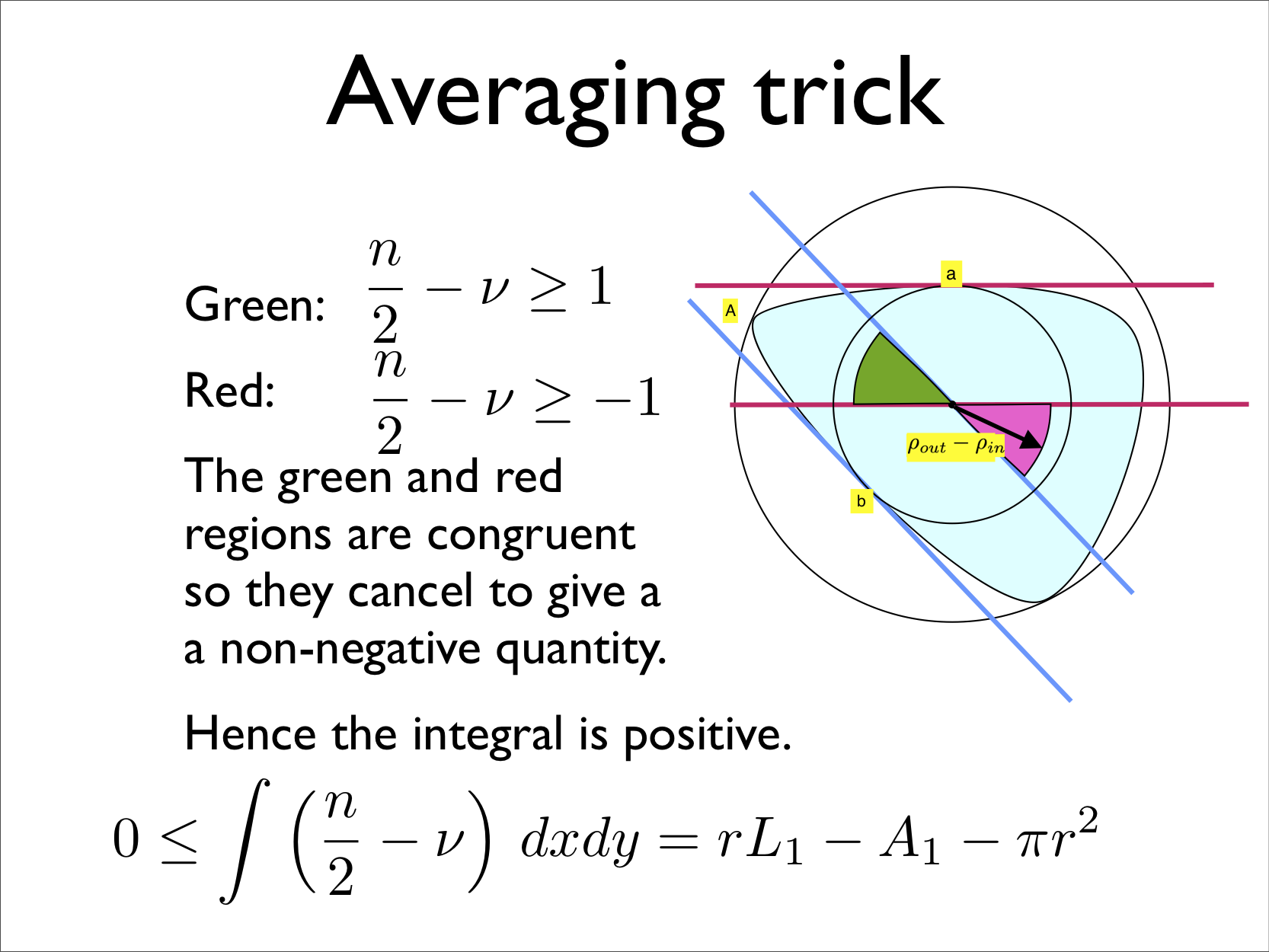}
\caption{averaging trick for the moving circle of radius $\rho_{in}$}
\label{default}
\end{center}
\end{figure}
). 

\begin{theorem}\label{T:poscenter} Using a Santalo integral geometry style proof we show that the Bonnesen functional $B(h)$  is non-negative on the interval $[\rho_{in},\rho_{out}]$.  If the support function $h$ is based at the center of the annulus then $\rhoin\le h(s)\le\rhoout$ for all points $s$ on the curve, $B(h(s))\ge 0$ and the center of the minimal annulus is a positive center. 
\end{theorem}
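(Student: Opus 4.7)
The plan is to reduce the theorem to two endpoint inequalities $B(\rhoin) \ge 0$ and $B(\rhoout) \ge 0$, then exploit the integral-geometric formula to prove those via the averaging trick. First, observe that $B(r) = -\pi r^2 + Lr - A$ is a concave-down quadratic in $r$, so non-negativity at the two endpoints yields $B(r) \ge 0$ on the whole interval $[\rhoin, \rhoout]$. The minimal annulus theorem (Theorem \ref{T:minimalannulus}) simultaneously ensures $\rhoin \le h(s) \le \rhoout$ for every $s \in \partial K$ when the support function is based at the annulus center $\mathcal{O}$, so $B(h(s)) \ge 0$ automatically and $\mathcal{O}$ is a positive center.

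For $B(\rhoin)$, start from the integral-geometric identity $B(\rhoin) = \int (n/2 - \nu)\, dx\, dy$ (the rotational $d\theta$ factor drops out because the moving body is a Euclidean disk). Partition the positions $\mathbf{p}$ of the disk center into three classes: (i) $D_{\mathbf{p}} \cap K = \emptyset$, integrand $0$; (ii) $D_{\mathbf{p}} \subset K$, integrand $-1$; (iii) $\partial D_{\mathbf{p}}$ meets $\partial K$ generically transversally, so $n \ge 2$, $\nu = 1$, and integrand $(n-2)/2 \ge 0$. Class (ii) has positive measure precisely because $\rhoin \le \rin$; let $U$ denote that ``bad'' set. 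Within class (iii), isolate the ``extra-good'' set $V = \{\mathbf{p} : n(\mathbf{p}) \ge 4\}$, where the integrand is $\ge 1$. It suffices to exhibit a measure-preserving injection $T : U \to V$, for then the positive mass from $V$ at least cancels the negative mass from $U$ in the Santalo integral.

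The averaging trick supplies such a $T$. My candidate is a translation $T(\mathbf{p}) = \mathbf{p} + \mathbf{v}$ by a vector $\mathbf{v}$ built from the four alternating contact points $a, b, A, B$ provided by the minimal annulus theorem. Injectivity and measure-preservation are automatic for a translation, so the content is to choose $\mathbf{v}$ (or a small finite family of $\mathbf{v}$'s, each handling a subregion of $U$) so that after translating an inscribed disk $D_{\mathbf{p}} \subset K$ we land on a disk that crosses $\partial K$ in at least four points. The geometric picture (the figure the paper refers to) is that by pushing the inscribed disk partway along a chord joining two contact points, one produces two disjoint arcs of $\partial K$ sliced by $D_{T(\mathbf{p})}$, each contributing two transverse crossings. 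This is where the minimal annulus hypothesis is essential: the alternation of the contacts $a, A, b, B$ around $\partial K$ guarantees that the two captured arcs are forced to be separated. Verifying that the chosen $\mathbf{v}$ always produces $n \ge 4$, uniformly for $\mathbf{p} \in U$, is the main technical obstacle and the only place where genuine geometric input enters.

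Once $B(\rhoin) \ge 0$ is in hand, the case $r = \rhoout$ is dual: the bad set becomes $\{\mathbf{p} : K \subset D_{\mathbf{p}}\}$, nonempty because $\rhoout \ge \rout$, and a symmetric translation argument built from the outer contact points $A, B$ produces translates $D_{T(\mathbf{p})}$ that slip off $K$ and meet $\partial K$ in at least four points. With both endpoint inequalities established, concavity of $B$ together with the support-function bound $\rhoin \le h \le \rhoout$ completes the proof that $\mathcal{O}$ is a positive center.
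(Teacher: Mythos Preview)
Your reduction to the two endpoint inequalities $B(\rhoin)\ge 0$, $B(\rhoout)\ge 0$ and the balancing strategy for the Santalo integral are exactly the paper's approach. The gap is in the choice of the measure-preserving map $T$: it is not a translation. The paper's map is the point reflection $\mathbf{p}\mapsto 2\mathcal{O}-\mathbf{p}$ through the annulus center $\mathcal{O}$, and it is the central symmetry of the Euclidean disk that makes this reflection send a $\rhoin$-disk to a $\rhoin$-disk and hence preserve the integrand's structure.

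Concretely, for $r=\rhoin$ the paper does not attempt to describe the bad set $U$ itself; instead it exhibits an explicit region $R\supseteq U$ and an explicit region $G\subseteq V$ that are exchanged by this reflection. Draw the tangent lines to $\partial K$ at the two inner contacts $a$ and $b$, and their parallels through $\mathcal{O}$. A $\rhoin$-disk centered in the strip between the $a$-tangent and its parallel must cross that tangent line, hence cross $\partial K$ at least twice near $a$; likewise for the $b$-strip. The two parallels through $\mathcal{O}$ cut the disk of radius $\rhoout-\rhoin$ about $\mathcal{O}$ into four pieces: the piece in both strips (on the $A$-side) forces $n\ge 4$; the opposite piece (in neither strip) is the only place $n$ can be $0$; the two lateral pieces give $n\ge 2$. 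Outside the $(\rhoout-\rhoin)$-disk the moving disk protrudes from the outer annulus boundary and so cannot lie inside $K$. The two opposite pieces are interchanged by reflection through $\mathcal{O}$, so their contributions to $\int(\tfrac{n}{2}-\nu)\,dx\,dy$ sum to something non-negative. The $\rhoout$ case is the dual argument built from the tangents at the outer contacts $A,B$. A single translation vector $\mathbf{v}$ cannot carry one of these opposite circular caps onto the other, which is precisely why your ``main technical obstacle'' does not close; replacing translation by the reflection through $\mathcal{O}$ and using the tangent-line/strip description resolves it.
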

A proof of the theorem itself can be found in  \cite{Gage1990} which uses cut, flip and paste techniques.  The new twist in this proof is the use of integral geometric techniques to avoid the  cut and paste comparisons.

\subsection{$B(\rhoin)\ge 0$}
We wish to determine explicitly the positions of a moving  disk  of radius $\rho_{in}$ where the integrand $\frac{n}{2}-\nu$ is $-1$, $0$, or greater than $1$. (Since we are dealing with convex bodies the intersection of the interiors $\nu$ is always either $1$ or $0$. The intersection of the boundaries is in general an even number except for sets of measure 0. ) 

 We draw a straight line tangent to the contact point $a$ and a parallel line through the center of the annulus.  The boundary of a random circle of radius $\rho_{in}$ whose center lies in that strip must intersect the tangent through $a$  and since that intersection point lies outside the lamina the boundary of the random inradius circle must intersect the boundary of the lamina in  at least 2 points.  

 Similarly we  draw the tangent at $b$ and draw a line parallel to that through the center of the annulus and observe that $\rho_{in}$ circles that lie in that strip intersect also intersect the boundary of the lamina in at least 2 points.  So for centers which lie in the triangle which is the intersection of both strips, the boundary of the disks intersect the lamina in at least 4 points (which are distinct if the center of the moving disk lies in the green region) and those that lie in neither strip don't intersect either tangent line and therefore might not intersect the boundary of the lamina at all. In this region it is possible that the $\frac{n}{2}-\nu$ integrand is $-1$, i.e. the moving  disk is entirely within the lamina and $n=0$ while $\nu=1$. 
 
 For definiteness we assume that the tangent lines intersect to the left of this configuration.  Notice that the their intersection point must lie outside the outer circle of the annulus since the point $A$ on the boundary of the lamina between $a$ and $b$ lies on the outer circle.  
 
 We also consider positions where the center of the moving disk lies outside the circle of radius $\rho_{out}-\rho_{in}$.  These disks must have points which lie outside the outer boundary of the annulus so they can't be contained within the lamina. If they don't intersect the lamina at all then $n=0$ and $\nu=0$ and the integrand is zero and their contribution to the integral is $0$.  If they do intersect the lamina   then the boundaries of the lamina and the disk intersect and the integrand $\frac{n}{2}-\nu$ is at least $0$ and so they contribute non-negatively to the integral. 
 
 The result is two regions, cut off by the lines through the center of the annulus and the circle of radius $\rho_{out}-\rho_{in}$.  The left region, (in green) is guaranteed to have 4 boundary intersection points, so the the integrand is at least 1 on this region, while the right region has an integrand  $-1$ or greater, but since it is congruent to the green region on the left  the net contribution of the integrand on these two regions to the integral is non-negative.  
 
 More details on why disks centered in the green region must intersect the boundary in 4 points:

 There is one configuration (of measure 0) when two of the boundary intersections coincide for a disk with a center in the green region.  This is when the center of the disk is on boundary of the $\rho_{out}-\rho_{in}$ triangle segment and the edge of the disk on the $\rho_{out}$ boundary is precisely at $A$.  Anywhere else the moving circle  boundary intersects first the boundary of the lamina, then the blue tangent line, then intersects the blue tangent line again, then hitting boundary of the lamina again between $A$ and $b$.  It then goes through a similar arc heading toward $a$, passing through the boundary between $A$ and $a$, then the red tangent line and finally the lamina boundary near $a$.  
 
  \begin{figure}[h]
\begin{center}
\includegraphics[scale=0.5]{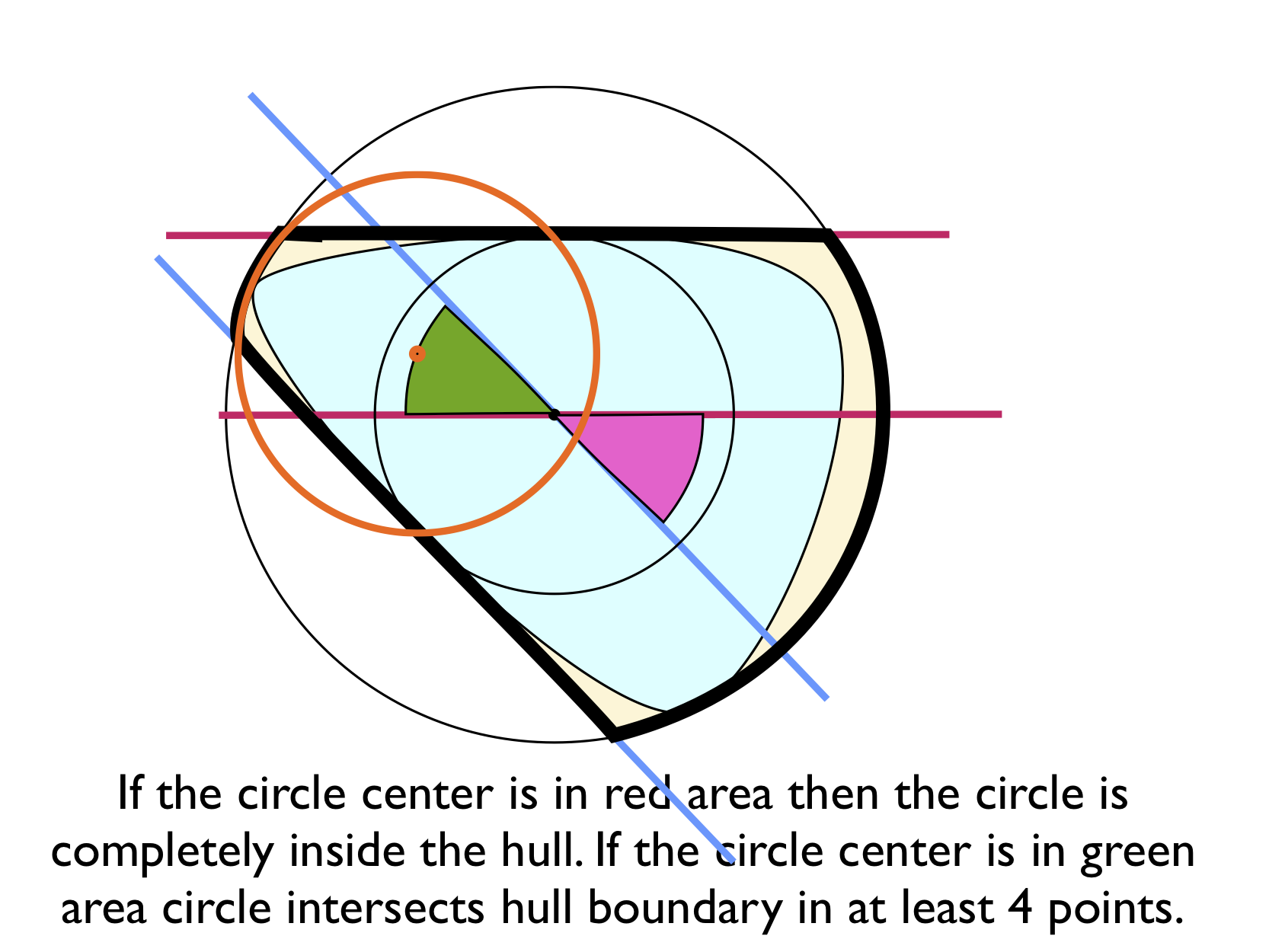}
\caption{averaging trick for the moving circle of radius $\rho_{in}$ showing moving disk}
\label{default}
\end{center}
\end{figure}

 For all positions where the center of the moving disk does not lie in the red region either the boundary of the moving disk intersects the boundary of the fixed lamina or the moving disk and the lamina don't intersect all all.  In either case the integrand $\frac{n}{2}-\nu$ is non-negative. 

Since the average of the integrand is non-negative when integrated over all positions we have $B(\rho_{in})\ge 0$.

Notice that the only facts that we need to draw these pictures and draw conclusions are the inner and outer boundaries of the annulus, their points of contact $a$, $b$, $A$ and $B$ with the convex curve, their tangent lines at $a$ and $b$ and the fact that the boundary curve is convex.   We also use the central symmetry of the circle (or $\mathcal{T}$) in proving the congruence of the red and green regions. 

\newpage
\subsection{$B(\rho_{out})\ge0$}

 \begin{figure}[h]
\begin{center}
\includegraphics[scale=0.5]{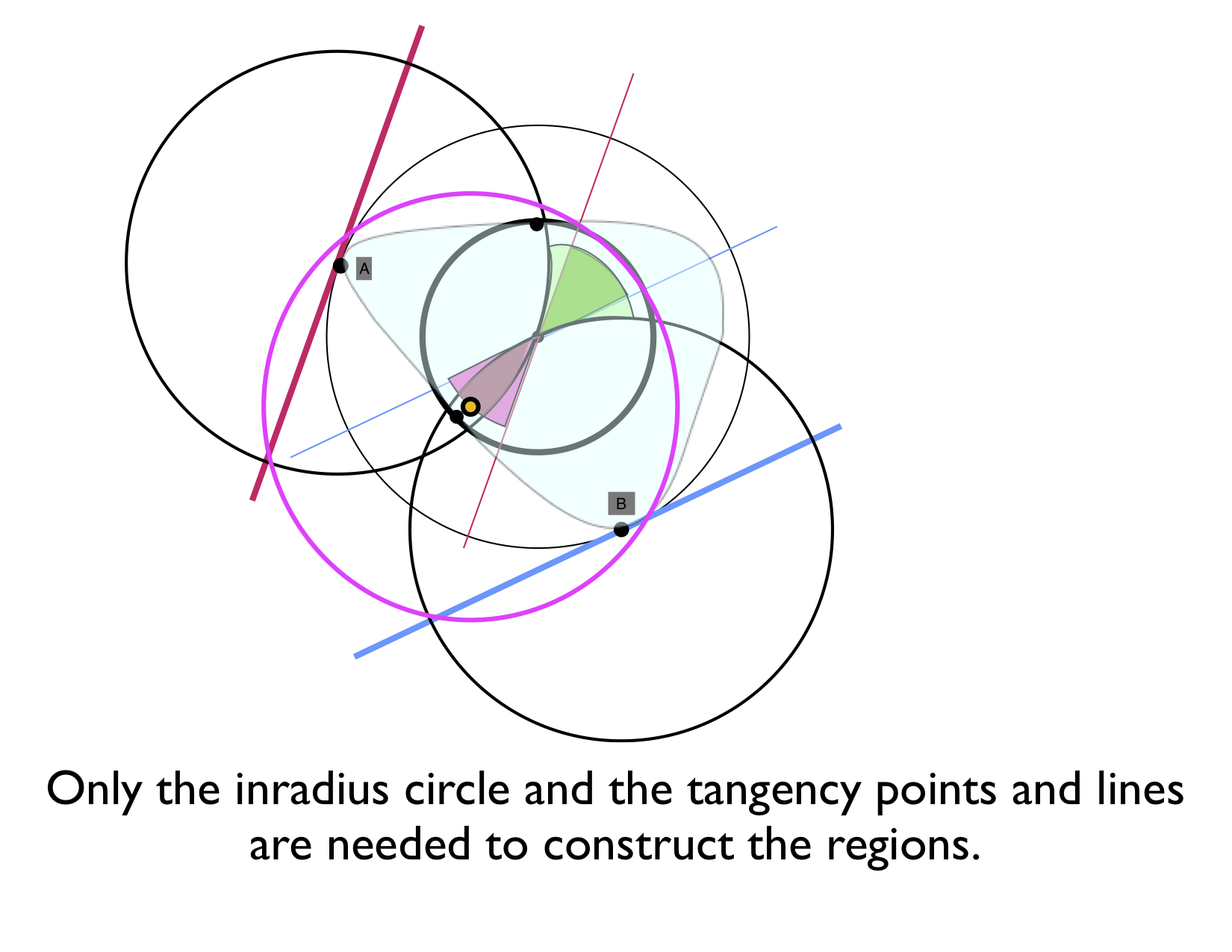}
\caption{averaging trick for the circle of radius $\rho_{out}$}
\label{default}
\end{center}
\end{figure}

The proof that  $B(\rhoout)\ge 0$ is a bit harder to visualize.  

As before we first identify the positions when $n=0$. If the $\rho_{out}$ disk doesn't intersect the lamina at all then $\nu$ is also $0$ which means the integrand is $0$.  If the interior of the lamina and of the disk intersect but the boundaries do not ($n=0$, $\nu=1$)  then the $\rho_{out}$ radius circle completely encloses the lamina and  and the integrand = $-1$).  There are certainly configurations like this since $\rho_{out}\ge \rout$).  We then find balancing regions of equal or greater integral geometric measure where $n\ge4$ and the integrand is greater than or equal to 1. 
 
In order for the moving disk to enclose the entire  fixed lamina it must enclose the inner disk of the annulus and the two points of contact $A$ and $B$ between the outer circle of the annulus and the fixed lamina. Figure 3 illustrates the idea behind this.  We have again drawn the tangents to $A$ and $B$ and their parallel lines through the center of the annulus, the moving disk is represented with a magenta boundary and a yellow center and is positioned so that it encloses $A$ and $B$ and just barely encloses the inner circle of the annulus.  The region in the darker red is the locus of the center of the moving disk where the distance from that center to $A\le \rho_{out}$ and the distance to $B\le \rho_{out}$ as well. These boundaries form curved arcs inside the red triangle.  The bottom arc of the region is the locus of the center when the disk encloses the inner circle of the annulus. In the drawing the boundary of the magenta $\rho_{out}$ radius moving disk still intersects the upper part of the boundary of the lamina in two points, but we could imagine a different lamina, deformed toward the inner circle of the annulus  whose boundary does not intersect the moving disk and whose intersections with the inner and out boundaries of  annulus remain the same.  

Next we find the balancing region:

 \begin{figure}[h]
\begin{center}
\includegraphics[scale=0.5]{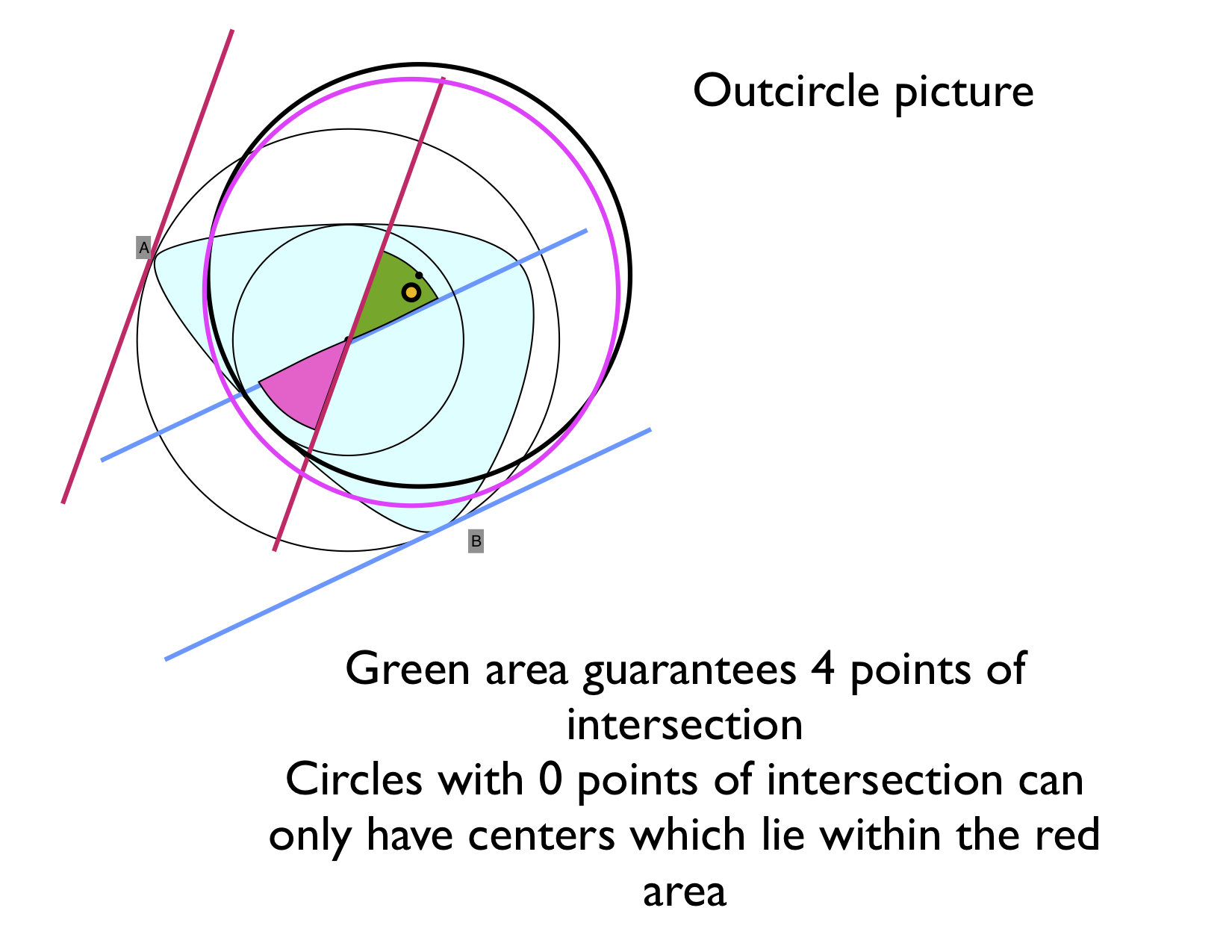}
\caption{averaging trick for the circle of radius $\rho_{out}$}
\label{default}
\end{center}
\end{figure}

In the light green curved triangle of figure 3 the center of the moving $\rho_{out}$ circle is a distance greater than $\rho_{out}$ from $A$ and from $B$.  In figure 4 we show a slightly smaller region on which the center's distance to the tangent lines is greater than $\rho_{out}$ and which is easier to visualize.  The moving circle still encloses the inner annulus so there are arcs of the boundary of the fixed lamina from $A$ to $a$ and to $b$ which must intersect the boundary of the moving circle at least once. Likewise there are two arcs from $B$ to $b$ and to $a$ which intersect the boundary of the moving circle.  Hence $n\ge 4$ and the integrand is on this region is greater than or equal to $1$. 

As in the proof for the $\rho_{in}$ radius circle the two triangles formed by the lines through the center of the annulus are congruent. The red triangle contains the curved region locus of the center where $n$ might be $0$ and the green triangle is contained in the curved region locus of the center where $n$ is guaranteed to be greater than or equal to $4$.  Hence the total integral is non-negative and $B(\rho_{out})\ge 0$.  

This completes the Santalo-Bonnesen integral geometric style proof for Euclidean geometry that $B(h(s))\ge 0$ for all points on the convex curve because $h(s)\in [\rho_{in}, \rho_{out}]$ when the support function $h$ is based at the center of the minimal annulus of the convex curve $\p K$ for Euclidean geometry.
We did not need to use cut, flip and paste techniques as in \cite{Gage1990,Gage1993} to create auxiliary centrally symmetric figures, but we did use the symmetry of the unit disk directly in this proof.  

\newpage
\section{Minkowksi geometry and relative geometry and the weighted curve shortening flow}

In this section of the paper we extend the results of section 1 to the Minkowski geometry/relative geometry case. 

This allows us to prove that the Minkowski version of the Bonnesen functional is non-negative on the interval $[\rho_{in},\rho_{out}]$, from the inner radius of the Minkowski minimal annulus to the outradius. From there one shows that curve shortening in Minkowski geometries also decreases the isoperimetric ratio and that eventually convex curves become ``circular'', that is they converge to the isoperimetrix of the Minkowski geometry.

\subsection{Weighted curvature flow\label{S:weightedcurvature}}

These Minkowski geometries have long been of geometric interest but more recently they   show up independently in a variant of the curve shortening flow where the rate at which the curve shrinks depends on the direction of the normal vector as well as the curvature.  

\begin{equation}\label{E:minkowskicurveshortening}
X_{t}=\gamma(\theta)kN
\end{equation}  

This ``weighted'' curvature equation arises naturally in simplified models of phase boundaries \cite{Taylor1978} and in papers modeling the motion of the boundaries of crystals as they dissolve. The results of \cite{Gage1993} and \cite{Gage-Li1994} show that if $\gamma$ is smooth,  $\pi$ periodic, with strictly positive curvature then any closed, convex curve  evolves to a point under the flow \eqref{E:minkowskicurveshortening} without developing singularities before reaching that point and that further there exists a self-similar flow. This self-similar flow identifies   the  isoperimetrix of the Minkowski geometry associated with  $\gamma$. Notice that it is the isoperimetrix which has self-similar flow, not the unit disk, although for Euclidean geometries these are the same. 

The shrinking shape has support function proportional to $h_{\mathcal{T}}$ and curvature proportional to $k_{\mathcal{T}}$ and it turns out that 	$\gamma = \frac{   h_{\mathcal{T}   }}{   k_{\mathcal{T}}   }$ and describes the ``natural'' curve shortening flow for the Minkowski geometry with isoperimetrix $\mathcal{T}$.

This  equates a physics/material science theory of the weighted curvature flow with the geometric Minkowski curve shortening theory. There is a bijective relation between $\gamma$'s which are smooth, positive and $\pi$ periodic and the convex sets $\mathcal{T}$ symmtric through the origin, which define the isoperimetrix of a symmetric Minkowski geometry.

\begin{itemize}
\item We will describe the Minkovski geometry/  relative convex geometry setting pointing out the analogs and differences with Euclidean geometry.
\item We define the ``right''     curve shortening flow for a symmetric Minkowski geometry and calculate some of its properties. 
\item We define the Minkowski (or relative) version of the Bonnesen functional $\mathcal{B}(r)$ and emphasize  its continued importance in deriving other relative isoperimetric  inequalities  including an integral bound on the Minkowski support function $\frac{h}{\tilde h}$ 
\begin{equation}
\int_{\gamma}\left(\frac{h}{\tilde h}\right)^{2}d\sigma \le \frac{\mathcal{L}A}{\alpha}\label{E:supportsqintmin}
\end{equation}
obtained assuming that $\mathcal{B}(\frac{h}{\tilde h})\ge0$ at each point of $\p K$. This is  the Minkowski analog of the key formula \eqref{E:supportsqint}
used to prove that the  curvature flow decreases the isoperimetric ratio in the Euclidean case. 

See also \cite{Boroczky-Lutwak-Yang-Zhang2012} sections 4 and 5 for  applications to planar cone-volume measure.

(Note: `Relative' and `Minkowski' are used somewhat interchangeably in this paper. `Relative' in this paper means relative to the convex isoperimetrix of the Minkowski geometry which we assume is symmetric through the origin so that it defines a metric. 
\item We will show that the integral geometry proof of the positive center theorem for the minimal annulus can be extended to a proof the relative positive center theorem for the Minkowski minimal annulus.  As in section 1 this proof, using integral geometry, is the main new result of this paper.
\item 
We will align the proofs so that the close correspondence of the Euclidean and Minkowski proofs are apparent. In fact since Euclidean geometry is a special case of a Minkowski geometry (with a circle for the isoperimetrix) the proofs in section 1 become unnecessary and could be replaced by the Minkowski proofs in this section-- although we be believe that it aids the intuition to follow the Euclidean proofs first where your Euclidean intuition is helpful.

\end{itemize}

\subsection{Minkowski geometry background}We first need some background facts about Minkowski geometry.

For plane Minkowski geometry the unit disk $\mathcal{U}$ which defines distance is an arbitrary, centrally symmetric convex body as opposed to the round unit disk.   Area in Minkowski geometry is the same as Euclidean area, but the length of a line segment depends on its direction and is obtained by comparing it to a parallel unit vector in the unit disk. In this paper we will assume that the boundary of the unit disk is smooth since we will be using it to define the coefficients of a differential equation.  This requirement is relaxed in Minkowski geomety and convex relative geometry and can probably be relaxed even in the context of the curve shortening flow.

The isoperimetrix $\mathcal{T}$ is the shape which minimizes the Minkowski length of its boundary for a given area. For Euclidean geometry the isoperimetrix and the unit ball are the same, they are both circles,  but that is not true for most Minkowski geometries. 

A quick summary of the relation between the unit ball and the isoperimetrix:  If the Minkowski unit ball $\mathcal{U}$ is parameterized by $r(\phi)$ in polar coordinates, then the the (Euclidean) support function of the isoperimetrix $\mathcal{T}$ is given by $h_{\mathcal{T}}=\frac{1}{r}$,  often abbreviated $\tilde h(\theta)$ to reduce the number of subscripts.  (This is equivalent to $\mathcal{T}$ being the polar reciprocal of $\mathcal{U}$ rotated by ninety degrees.) see \cite{Gage1993} for a fuller explanation. 

The support function $h(\theta) =h_{K}(\theta)$ is the maximum of the projection of $K$ onto the direction $\vec \theta$ or equivalently the distance to the support line of $\p K$ which is perpendicular to $\vec \theta$. 
This means that since $\p K$ is convex that $h(s)=h(\theta)$ where $\vec \theta$ is perpendicular to the tangent of $\p K$ at $s$. If $\p K$ is strictly convex this is a bijection between $\theta$ and $s$.  The integral geometry results in this section hold even if $\p K$ is flat in some intervals or has corners.

 The closely related theory of relative geometries introduces the concept of mixed volumes of two convex bodies. See section 2 of \cite{Gage1993} for a short but comprehensive  explanation of these geometries and associated inequalities and of the role they play in defining the Minkowski metric and analyzing the weighted curvature flow. 
 
For a closed convex curve the Minkowski length can be defined by  the mixed volume of the interior $K$ of the curve with the isoperimetrix: $V(K,\mathcal{T})$.  For reference we include the formulas for the mixed volumes of $K$ and  $\mathcal{T}$ defined in terms of their Euclidean support functions $h_{\mathcal{T}}$ and $h_K$. 
\begin{align}
\mathcal{L}(\partial K) &=V(K,\mathcal{T}) = \int_{0}^{2\pi} \left(h_{K}h_{\mathcal{T}}-h_{K}^{'}h_{\mathcal{T}}^{'}\right)d\theta = \int \left(h_{K}\tilde h- h'_{K}\tilde h' \right)\,d\theta
\label{E:mixedvolume}\\
2A=\mathcal{A} &= V(K,K)= \int_{0}^{2\pi} \left( h_K^{2} -(h_K')^2\right)d\theta \text{  ( i.e. the mixed volume is twice the Euclidean area of $K$})\\
2\alpha& = V(\mathcal{T},\mathcal{T})= \int_{0}^{2\pi}(\tilde h^{2} -\tilde h'^{2})\, d\theta    \text{  ( twice the Euclidean area  of the isoperimetrix)}
\end{align}

Note: In \cite{Gage1993} the normalization was to take $\alpha$ to be $V(\mathcal{T}, \mathcal{T})$ as is commonly done in relative geometry papers, i.e. twice what it is in this paper.  Our choice here emphasizes the compatibility with Euclidean forumula, for example if the isoperimetric is a Euclidean unit circle then $\alpha$ equals $\pi$, the area of the unit disk, rather than $2\pi$.  One needs to be cautious with these factors of 2 as one reads relative geometry papers since different authors have used different conventions over the years.

Integrating  the first equation by parts we obtain an equivalent definition of the Minkowski length in terms of the unit disk. 
\begin{equation}\label{mixedvolume}
\mathcal{L}(\p K) =\int_{\p K}  h_{K}(\tilde h +\tilde h '') d\theta =\int \tilde h(h_{K}+h_{K}'' )d\theta = \int \tilde h\frac{d\theta}{k_{K}}= \int \tilde h ds_{K}
\end{equation}
We can also relate this to Euclidean arclength and Minkowski arclength as
\begin{equation}\label{identities}
\mathcal{L}(\partial K)=\int \tilde h ds_{K}=\int \frac{	ds_{K}}{r}=\int d\sigma 
\end{equation}
since $d\theta = k_{K}ds_{K}$ where $k_{K}$ is the euclidean curvature of the boundary of $K$ and $ds_{K}$ is the euclidean element of arc length
Similarly  $d\sigma = \tilde h ds_{K}$ is the Minkowski element of arc length .

\subsection{Basic Minkowski identies\label{S:basicidentiesminn}}\hfill

We summarize these identities which appear in equations \eqref{mixedvolume} and \eqref{identities} for use later.
\begin{itemize}
\item $h$ or $h_{K}$ is the support function of the convex curve
\item $h_{\mathcal{T}}$	 or $\tilde h$ is the Eucidean support function of the isoperimetrix
\item $k_{K}$ or $k$ is the Eucidean curvature of the curve
\item $k_{\mathcal{T}}$ or $\tilde k$ is the Eucidean curvature of the isoperimetrix
\item $ds = \frac{d\theta}{k}$ is Euclidean arclenth of the curve
\item $d\sigma = \tilde h ds = \frac{ds}{r}$ is the Minkowski arc length where $r$ is the radial function for the unit disk $\mathcal{U}$
\end{itemize} 
These will be used to simplify the notation for some of the formulas below to align them with the more familiar formulas of Euclidean geometry where $r$ and $\tilde h$ are identically $1$.

\subsection{The centrality of the Minkowski Bonnesen functional}
The Bonnesen inequality is equally central in proving isoperimetric inequalities for Minkowski geometries.
\begin{itemize}
\item We define the Minkowski Bonnesen functional for a smooth closed curve $\p K$ bounding a lamina $K$ in the plane as
\begin{equation}
\mathcal{B}(r) :=r\mathcal{L}-A-\alpha r^2
\end{equation}
\item If $\mathcal{B}(r)\ge0$ for even one value $r$ then the quadratic functional has real roots and its discriminant $\mathcal{L}^{2}-4\alpha A$ is greater than or equal to $0$.  This is the isoperimetric inequality.   
\item If $\mathcal{B}(\rin)\ge 0$, where $\rin$ is the inradius, the value of the largest $r$ such that $r\mathcal{T}$ is enclosed by the curve and also $\mathcal{B}(\rout)\ge0$ where $\rout$ is the out radius, the value of the smallest $r$ such that $r\mathcal{T}$	 encloses the curve, then, because the $B(r)$ quadratic is concave down, $\mathcal{B}(r)\ge 0$ for all $r$ in the interval $[\rin, \rout]$.
\item The Minkowski geometry version of the Bonnesen inequality follows from this as in the introduction \ref{T:bonnesenInequalityIntro}.
\begin{equation}
\mathcal{L}^{2}- 4\alpha A \ge\alpha^{2}(\rout-\rin)^2
\end{equation}
\item As in the first section we will define the minimal width Minkowski annulus and using integral geometry methods and the ``averaging'' trick we'll show that $\mathcal{B}(r)\ge 0$ for $r\in[\rhoin,\rhoout]$ which implies
$$ \mathcal{L}^{2}- 4\alpha A \ge\alpha^{2}(\rhoout-\rhoin)^2$$
\end{itemize}

\subsection{The curve shortening flow associated with a Minkowski geometry}

In Euclidean geometry the circle shrinks in a self-similar fashion under the Euclidean curve shortening flow.  It turns out as mentioned in section \ref{S:weightedcurvature} that the appropriate Minkowski analog is a flow for which the Minkowski isoperimetrix shrinks self-similarly.  The weighted curvature flow where
\begin{equation}\label{rightgamma}
\gamma(\theta)= \frac{h_{\mathcal{T}}}{k_{\mathcal{T}}}=\frac{\tilde h}{\tilde k}
\end{equation}

in \eqref{E:minkowskicurveshortening} does exactly that. Once this ``right'' weighted curvature flow is chosen the main result of \cite{Gage1993} uses the Bonnesen inequality to show that  with this flow, any closed convex evolves to a point and converges asymptotically  to a shrinking $\mathcal{T}$.  This also implies that the self-similar flow for the equation is unique.

\begin{theorem}
For smooth, strictly convex curves, the evolution equation \eqref{E:minkowskicurveshortening} with 	$\gamma =\frac{\tilde h}{\tilde k}$ can be rewritten as the evolution equation for the support function
\begin{equation}\label{E:evolsupport}
h_{t}(\theta)=-\frac{\tilde h}{\tilde k}k=-\tilde h(\tilde h +\tilde h'')k
\end{equation}
\end{theorem}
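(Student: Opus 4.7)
The plan is to exploit the fact that for a smooth, strictly convex curve one may parameterize the boundary by the angle $\theta$ of its outward normal, in which case $\theta$ is independent of $t$ along the evolution in the sense that the normal direction at a given $\theta$ is literally the fixed unit vector pointing in that direction. This decouples the time derivative from any motion of the Frenet frame and makes the computation of $h_{t}$ essentially a one-line calculation.

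First, I would recall from the paper's own definition that $h(\theta,t) = \langle X(\theta,t), -N(\theta)\rangle$ with $N$ the inward unit normal, and stress that under the normal-angle parameterization $N(\theta) = -(\cos\theta,\sin\theta)$ depends only on $\theta$, not on $t$. Differentiating at fixed $\theta$ then gives
\begin{equation*}
h_{t}(\theta) = \langle X_{t}(\theta,t), -N(\theta)\rangle.
\end{equation*}
Substituting the flow $X_{t} = \gamma(\theta)\,k\,N$ and using $\langle N,N\rangle = 1$ yields $h_{t} = -\gamma(\theta)\,k$, and plugging in $\gamma = \tilde h/\tilde k$ produces the first equality $h_{t} = -(\tilde h/\tilde k)\,k$.

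For the second equality I would invoke the classical radius-of-curvature formula applied to the isoperimetrix $\mathcal{T}$: writing the boundary of $\mathcal{T}$ in terms of its support function as $X_{\mathcal{T}}(\theta) = \tilde h(\theta)(\cos\theta,\sin\theta) + \tilde h'(\theta)(-\sin\theta,\cos\theta)$, differentiation gives $|X_{\mathcal{T}}'(\theta)| = \tilde h + \tilde h''$, so the radius of curvature of $\mathcal{T}$ satisfies $1/\tilde k = \tilde h + \tilde h''$. (This formula has already been used implicitly in the paper's derivation of the mixed-volume identity \eqref{mixedvolume}, so I would simply cite it.) Thus $\tilde h/\tilde k = \tilde h(\tilde h + \tilde h'')$, and the chain of equalities is complete.

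The only real subtlety, and hence the main place where one needs to be careful rather than a genuine obstacle, is sign and orientation bookkeeping: which normal is inward, whether $\theta$ parameterizes the inward or outward normal, and the sign in $h = \langle X,-N\rangle$. Once the conventions already fixed in the paper's preliminaries are consulted, the argument is two lines and requires no further analytic input. Strict convexity is used exactly once — to ensure that the normal-angle parameterization is a genuine diffeomorphism from $S^{1}$ to $\partial K$ — and smoothness guarantees that $h$ and $\tilde h$ have the two derivatives appearing on the right-hand side.
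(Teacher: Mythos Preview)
Your proof is correct and follows essentially the same route as the paper: pass to the normal-angle parameterization (equivalently, add a tangential component to the flow so that each point retains its normal direction) and read off $h_{t}$ from the normal component of the velocity, then use $1/\tilde k = \tilde h + \tilde h''$. The paper's own proof is a single sentence that records exactly this idea and defers the details to \cite{Epstein-Gage1987}, so your version is more explicit but not different in substance.
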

\begin{proof}
This amounts to adding a tangential component to the original flow, which reparametrizes the curve according by the angle of the normal vector, but does not change its shape. (See \cite{Epstein-Gage1987})
\end{proof}
 Note that if $\tilde h\equiv 1$ (the Euclidean case) then equation \eqref{E:evolsupport} becomes
 $h_{t}=-k$ which is clearly satisfied by the shrinking circle.  

\begin{lemma}

A self-similar solution to equation \eqref{E:evolsupport} is $h(t,\theta)=\tilde h (\theta)\sqrt{2T-2t}$ where $T$ is the time when the isoperimetrix shrinks to a point. Conversely if $h(t,\theta)=\lambda(t)h(0,\theta)$ is a self-similar solution of \eqref{E:evolsupport} then $h(0,\theta)/k(0,\theta) $ is proportional to $\gamma(\theta)$.
\end{lemma}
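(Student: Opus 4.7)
The plan is to verify the forward direction by direct substitution and then handle the converse by a separation-of-variables argument in $t$ and $\theta$.

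For the forward direction, I will substitute $h(t,\theta) = \tilde h(\theta)\sqrt{2T-2t}$ into the support-function evolution \eqref{E:evolsupport}. The left-hand side is immediate: $h_t = -\tilde h(\theta)/\sqrt{2T-2t}$. For the right-hand side, the key observation is the standard identity $1/k = h + h''$ for the curvature of a smooth convex curve in terms of its support function. Since differentiation in $\theta$ commutes with the scalar factor $\sqrt{2T-2t}$, this gives $1/k(t,\theta) = (\tilde h+\tilde h'')\sqrt{2T-2t} = \sqrt{2T-2t}/\tilde k(\theta)$, hence $k(t,\theta) = \tilde k(\theta)/\sqrt{2T-2t}$. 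Plugging in, the right-hand side $-\tilde h(\tilde h+\tilde h'')k$ collapses to $-\tilde h(\theta)/\sqrt{2T-2t}$, matching $h_t$.

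For the converse, suppose $h(t,\theta) = \lambda(t)\,h(0,\theta)$ is a self-similar solution of the general weighted flow, for which the support function satisfies $h_t = -\gamma(\theta)k$ (obtained exactly as in the proof of the preceding theorem by adding the tangential reparametrization that fixes the direction $\theta$ in time). Computing the two sides: $h_t = \lambda'(t)h(0,\theta)$, while the identity $1/k = h+h''$ scales linearly in $\lambda(t)$ and yields $k(t,\theta) = k(0,\theta)/\lambda(t)$. Substituting into $h_t = -\gamma k$ gives
\begin{equation*}
\lambda(t)\lambda'(t) \;=\; -\,\gamma(\theta)\,\frac{k(0,\theta)}{h(0,\theta)}.
\end{equation*}
Since the left side depends only on $t$ and the right only on $\theta$, both are equal to a common constant $c$. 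Hence $\gamma(\theta) = -c\,h(0,\theta)/k(0,\theta)$, which is the stated proportionality. As a cross-check, the self-similar profile from the first part has $\lambda(t)=\sqrt{2T-2t}$ so $\lambda\lambda' = -1$, and the corresponding ratio $h(0,\theta)/k(0,\theta) = \sqrt{2T}\,\tilde h \cdot \sqrt{2T}/\tilde k$ is indeed proportional to $\tilde h/\tilde k$, recovering \eqref{rightgamma}.

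I do not expect a significant obstacle. The one subtlety worth being explicit about is the reduction to $h_t=-\gamma k$ under the reparametrization by normal angle, which the preceding theorem already licenses; everything else is routine differentiation together with the support-function formula for curvature.
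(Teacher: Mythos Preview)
Your proof is correct. The paper actually omits the proof of this lemma entirely, treating it as a routine verification and moving directly to the remark ``This verifies that this choice for $\gamma$ is the `right' curve shortening flow\ldots''; your direct substitution using $1/k = h+h''$ for the forward direction and the separation-of-variables argument for the converse is exactly the natural way to fill in what the paper leaves unwritten.
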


This verifies that this choice for $\gamma$ is the ``right'' curve shortening flow, the flow for which the isoperimetrix evolves self-similarly. 	

\subsection{Derivatives of Minkowski length and area under the curve shortening flow}
\begin{lemma}	
$$2A_{t}= \mathcal{A}_{t}=-2\alpha = -2\int \gamma d\theta$$ 
\end{lemma}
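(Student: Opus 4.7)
The plan is a direct variational computation, ending with the identification of $\int\gamma\,d\theta$ as the mixed volume $V(\mathcal{T},\mathcal{T})$.

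First I would invoke the standard first-variation formula for area under a normal flow: if $X_t = fN$ with $N$ the inward unit normal, then $A_t = -\int_{\p K} f\,ds$. (This is derived by differentiating $A = \tfrac12\int h\,ds$ in $t$, using $h_t = -f$ plus tangential terms that integrate to zero; equivalently it follows from the divergence theorem applied to the normal velocity.) Specializing to $f = \gamma k$ and converting $k\,ds$ to $d\theta$ via the normal-angle parametrization of the strictly convex curve yields
\begin{equation*}
A_t \;=\; -\int_0^{2\pi}\gamma(\theta)\,d\theta, \qquad\text{so}\qquad \mathcal{A}_t \;=\; 2A_t \;=\; -2\int_0^{2\pi}\gamma(\theta)\,d\theta.
\end{equation*}

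Next I would simplify $\gamma$ using \eqref{rightgamma} together with the standard radius-of-curvature identity $1/\tilde k = \tilde h + \tilde h''$ for the isoperimetrix parametrized by the angle of its outward normal, giving $\gamma = \tilde h(\tilde h + \tilde h'')$. A single integration by parts, whose boundary terms vanish by $2\pi$-periodicity of $\tilde h$ and $\tilde h'$, then produces
\begin{equation*}
\int_0^{2\pi}\gamma\,d\theta \;=\; \int_0^{2\pi}\!\bigl(\tilde h^2 - (\tilde h')^2\bigr)\,d\theta \;=\; V(\mathcal{T},\mathcal{T}),
\end{equation*}
which is $2\alpha$ by the definition of $\alpha$ displayed earlier. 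Combining these two steps closes the stated chain of equalities.

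There is no genuine analytic obstacle; the argument reduces to one first variation of area plus one integration by parts. The only delicate point is the factor-of-two bookkeeping between $A$ and $\mathcal{A}$, and between $\alpha$ and $V(\mathcal{T},\mathcal{T})$---precisely the convention clash the paper itself flags in the note following the mixed-volume formulas---so I would double-check each numerical constant as I go.
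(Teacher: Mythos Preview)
Your approach is essentially the paper's. The paper differentiates the support-function area formula $\mathcal{A}=\int(h^{2}-(h')^{2})\,d\theta$, integrates by parts to $2\int h_{t}/k\,d\theta$, and substitutes $h_{t}=-\gamma k$; you instead quote the geometric first variation $A_{t}=-\int f\,ds$ and substitute $f=\gamma k$. These are the same computation up to one integration by parts, and both finish by rewriting $\int\gamma\,d\theta$ as $\int(\tilde h^{2}-(\tilde h')^{2})\,d\theta$.

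Your bookkeeping worry is justified, though: under the paper's convention $2\alpha=\int(\tilde h^{2}-(\tilde h')^{2})\,d\theta$, your own (correct) computation gives $\int\gamma\,d\theta=2\alpha$, so the asserted equality $-2\alpha=-2\int\gamma\,d\theta$ does not close as written; the paper's final displayed line makes the same slip. The chain that is actually consistent with the definitions is $2A_{t}=\mathcal{A}_{t}=-2\int\gamma\,d\theta=-4\alpha$, equivalently $A_{t}=-2\alpha$. So ``combining these two steps closes the stated chain'' is true only after correcting that factor of two.
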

\begin{proof}
The proof is a straightforward calculation when done in terms of the support function. Recall $h +h'' =\frac{1}{k}$
\begin{align}
2A_{t} &= \mathcal{A}_{t}=2\int \left( h h_{t}-h'h'_{t}\right) \, d\theta\\
 &= 2\int\frac{\tilde h _{t}}{k}d\theta\notag\\
&= -2\int \frac{\tilde h}{\tilde k}	d\theta = -2\int \gamma d\theta\notag\\
&= -2\int \left(\tilde h^{2}- \tilde h'^{2}\right) d\theta = -2\alpha
\end{align} 
\end{proof}
\begin{lemma}{}
$$ \mathcal{L}_{t}= -\int \left(\frac{k}{\tilde k}\right)^2d\sigma $$ 
\end{lemma}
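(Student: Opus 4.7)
The plan is to differentiate the mixed volume representation of the Minkowski length and then unwind the result into the intrinsic Minkowski arc length element $d\sigma$. Concretely, I will start from
\begin{equation*}
\mathcal{L}(\p K) = V(K,\mathcal{T}) = \int_{0}^{2\pi}\bigl(h\,\tilde h - h'\,\tilde h'\bigr)\,d\theta,
\end{equation*}
differentiate under the integral in $t$, and use integration by parts in $\theta$ on the cross term $\int h_t'\,\tilde h'\,d\theta$. Since $\tilde h$ is time independent (it describes the fixed isoperimetrix) and all functions are $2\pi$-periodic, the boundary contributions vanish and I obtain
\begin{equation*}
\mathcal{L}_t = \int h_t\,(\tilde h + \tilde h'')\,d\theta.
\end{equation*}

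Next I would plug in the support function evolution equation \eqref{E:evolsupport}, which gives $h_t = -\tilde h\,(\tilde h + \tilde h'')\,k = -\tilde h\,k/\tilde k$ after using the standard identity $\tilde h + \tilde h'' = 1/\tilde k$ for the isoperimetrix, and the same identity once more on the factor multiplying $h_t$ in the integral. This yields
\begin{equation*}
\mathcal{L}_t = -\int \frac{\tilde h\,k}{\tilde k^{\,2}}\,d\theta.
\end{equation*}
Finally I would convert $d\theta$ back into Minkowski arc length using the identities summarized in Section \ref{S:basicidentiesminn}: $ds = d\theta/k$ and $d\sigma = \tilde h\,ds$, so that $\tilde h\,d\theta/k = d\sigma$. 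Factoring the integrand as $(k/\tilde k)^{2}\cdot(\tilde h/k)$ then gives
\begin{equation*}
\mathcal{L}_t = -\int \Bigl(\frac{k}{\tilde k}\Bigr)^{2} d\sigma,
\end{equation*}
which is the claimed formula.

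There is no serious obstacle here; the computation is a routine differentiation of a mixed volume followed by two applications of $\tilde h + \tilde h'' = 1/\tilde k$ and a rewriting of the Euclidean measure as the Minkowski measure. The only place requiring minor care is the bookkeeping of factors of $\tilde h$ and $\tilde k$ when passing between the $(h,\tilde h)$ form of the mixed volume and the $(k,\tilde k,d\sigma)$ form of the answer; doing the integration by parts first (so that $\tilde h + \tilde h''$ appears naturally) keeps the algebra transparent and matches the shape of the corresponding Euclidean derivation $L_t = -\int k^{2}\,ds$ that the reader will already have in mind.
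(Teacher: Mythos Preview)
Your proposal is correct and is essentially the paper's own argument: the paper starts directly from $\mathcal{L}_t=\int h_t/\tilde k\,d\theta$ (which is exactly your integration-by-parts identity $\int h_t(\tilde h+\tilde h'')\,d\theta$, already established in \eqref{mixedvolume}), substitutes $h_t=-\tilde h k/\tilde k$, and then rewrites $\tilde h\,d\theta/k=d\sigma$ to obtain the result. The only difference is that you spell out the integration by parts explicitly rather than invoking the previously derived form of the mixed volume.
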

\begin{proof}
The calculation is immediate in terms of the support funcition
$$\mathcal{L}_{t} =\int \frac{h_{t}}{\tilde k}d\theta = -\int \frac{\tilde h k}{\tilde k^2}d \theta =\int \tilde h \frac{k^{2}}{\tilde k^{2}}ds = \int \left(\frac{k}{\tilde k}\right)^{2}d\sigma 
$$

\end{proof}

\subsection{The derivative of the Minkowski isoperimetric ratio} 
\begin{lemma}			
\begin{equation}\label{E:derivativeofIP}
\left(\frac{\mathcal{L}^{2}}{A}\right)_{t} = 
= -2\frac{\mathcal{L}}{A}
\left( \int\left( \frac{k}{\tilde k}\right)^2d\sigma - \alpha \frac{\mathcal{L}}{A}\right) 
\end{equation}
\end{lemma}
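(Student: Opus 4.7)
The plan is essentially computational: this identity is a purely algebraic consequence of the two preceding lemmas, so no new geometric content is required. The strategy is to differentiate the quotient $\mathcal{L}^{2}/A$ by the quotient rule, substitute the formulas for $\mathcal{L}_{t}$ and $A_{t}$ provided by the previous two lemmas, and then factor.

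Step one, apply the quotient rule:
$$\left(\frac{\mathcal{L}^{2}}{A}\right)_{t}=\frac{2\mathcal{L}\,\mathcal{L}_{t}}{A}-\frac{\mathcal{L}^{2}A_{t}}{A^{2}}.$$
Step two, substitute. From the lemma $\mathcal{L}_{t}=-\int (k/\tilde k)^{2}d\sigma$ the first term becomes $-\frac{2\mathcal{L}}{A}\int(k/\tilde k)^{2}d\sigma$. From $2A_{t}=\mathcal{A}_{t}=-2\alpha$ the second term contributes $+\alpha\mathcal{L}^{2}/A^{2}$, the minus sign in front of $A_{t}$ combining with $A_{t}<0$ to produce a positive quantity.
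Step three, factor the common prefactor $-2\mathcal{L}/A$ out of both terms to obtain
$$\left(\frac{\mathcal{L}^{2}}{A}\right)_{t}=-\frac{2\mathcal{L}}{A}\left[\int\left(\frac{k}{\tilde k}\right)^{2}d\sigma-\frac{\alpha\mathcal{L}}{A}\right],$$
which is the claimed identity.

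There is no genuine analytic obstacle here, because all the differentiation of $h$ and $\tilde h$ has already been absorbed into computing $\mathcal{L}_{t}$ and $\mathcal{A}_{t}$ in the two preceding lemmas. The one bookkeeping subtlety worth flagging is the factor-of-two normalization: the paper uses Euclidean area $A$ alongside the mixed volume $\mathcal{A}=2A$, and takes $\alpha$ to be the Euclidean area of $\mathcal{T}$ (half of the conventional $V(\mathcal{T},\mathcal{T})$). One must honor these conventions when reading $A_{t}$ off of $2A_{t}=-2\alpha$, since the coefficient $\alpha\mathcal{L}/A$ inside the bracket depends on precisely this normalization. A convenient sanity check is the Euclidean specialization $\tilde h\equiv 1$, $\tilde k\equiv 1$, $\alpha=\pi$, $d\sigma=ds$, which reduces the formula to the familiar \eqref{derivativeOfIsoperimetricRatio}.
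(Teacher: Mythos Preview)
Your approach is exactly the paper's: the paper's entire proof reads ``Immediate calculation proves the lemma,'' and what you have written is precisely that immediate calculation --- quotient rule, substitute the two preceding lemmas, factor.

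One remark on the bookkeeping you yourself flag: taken literally, the preceding lemma gives $A_{t}=-\alpha$, and then your Step~2 correctly produces $+\alpha\mathcal{L}^{2}/A^{2}$, but factoring out $-2\mathcal{L}/A$ from that term yields $-\alpha\mathcal{L}/(2A)$ inside the bracket, not $-\alpha\mathcal{L}/A$. The stated identity \eqref{E:derivativeofIP} is obtained with $A_{t}=-2\alpha$, which is what the Euclidean sanity check ($A_{t}=-2\pi$) actually requires and what the computation $A_{t}=-\int\gamma k\,ds=-\int\gamma\,d\theta=-\int(\tilde h^{2}-\tilde h'^{2})\,d\theta=-2\alpha$ gives; there is a factor-of-two slip in the paper's statement of the $A_{t}$ lemma. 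Your caution about the normalization was well placed.
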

\begin{proof}
Immediate calculation proves the lemma.
\end{proof}

While the curvature, length and area do not depend on the choice of an origin the support functions $h$ and $\tilde h$ do depend on the origin.

\begin{lemma}Suppose that there is a point $\mathcal{O}$ such that support functions based at $\mathcal{O}$ satisfy
\begin{equation}\label{E:supportsqintminkowski1}
\int \left(\frac{h}{\tilde h}\right)^2d\sigma \le \frac{\mathcal{L}A}{\alpha} 
\end{equation}
then 

$$\left(\frac{\mathcal{L}^{2}}{A}\right)_{t}\le0$$

and the isoperimetric ratio is non-increasing.
\end{lemma}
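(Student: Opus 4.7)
The plan is to mirror the Euclidean argument preceding \eqref{E:supportsqint} essentially line for line, under the dictionary $h \to h/\tilde h$, $\kappa \to k/\tilde k$, $ds \to d\sigma$, $\pi \to \alpha$.

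The first step is to identify the Minkowski analog of the identity $L = \int h\kappa\, ds$ from equation \eqref{E:intkh}. The claim is
$$\mathcal{L} = \int \frac{h}{\tilde h}\cdot\frac{k}{\tilde k}\, d\sigma.$$
To verify this, I would just unwind the definitions from Section 2.3: since $d\sigma = \tilde h\, ds$ and $k\, ds = d\theta$, the right-hand side collapses to $\int h/\tilde k\, d\theta = \int h_{K}(\tilde h + \tilde h'')\, d\theta$, which is exactly the mixed-volume formula for $V(K,\mathcal{T}) = \mathcal{L}$ from \eqref{mixedvolume}. Everything on this step is bookkeeping with the identities $d\sigma = \tilde h\, ds$ and $1/\tilde k = \tilde h + \tilde h''$.

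The second step is Cauchy--Schwarz with respect to the measure $d\sigma$, taking $f = h/\tilde h$ and $g = k/\tilde k$:
$$\mathcal{L}^2 = \left(\int \frac{h}{\tilde h}\cdot\frac{k}{\tilde k}\, d\sigma\right)^2 \le \int\left(\frac{h}{\tilde h}\right)^{\!2} d\sigma \cdot \int\left(\frac{k}{\tilde k}\right)^{\!2} d\sigma.$$
Substituting the hypothesis \eqref{E:supportsqintminkowski1} into the first factor and dividing through by $\mathcal{L}$ yields
$$\frac{\alpha\mathcal{L}}{A} \le \int\left(\frac{k}{\tilde k}\right)^{\!2} d\sigma.$$
Inserting this inequality into the derivative formula \eqref{E:derivativeofIP} makes the parenthesized factor non-negative, and since $\mathcal{L}/A > 0$ the leading minus sign gives $(\mathcal{L}^{2}/A)_{t}\le 0$, as required.

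The only step with real content is recognizing that $\mathcal{L}$ has the clean representation $\int (h/\tilde h)(k/\tilde k)\, d\sigma$ so that Cauchy--Schwarz produces precisely the two integrals we need; everything else is a formal translation of the Euclidean proof. Since the identities of Section 2.3 were organized to make exactly this substitution transparent, I do not anticipate a genuine obstacle. The positivity of $h$ is not required for Cauchy--Schwarz to go through (the integrands are squared), but of course the hypothesis \eqref{E:supportsqintminkowski1} is only expected to hold when $\mathcal{O}$ is a relative positive center in the sense developed earlier in the paper.
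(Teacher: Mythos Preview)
Your proposal is correct and essentially identical to the paper's proof: both start from $\mathcal{L}=\int h/\tilde k\,d\theta$ and apply Cauchy--Schwarz to obtain $\mathcal{L}^{2}\le \int (h/\tilde h)^{2}\,d\sigma\cdot\int (k/\tilde k)^{2}\,d\sigma$, then substitute the hypothesis and plug into \eqref{E:derivativeofIP}. The only cosmetic difference is that you apply Cauchy--Schwarz directly in the measure $d\sigma$ with the factors $h/\tilde h$ and $k/\tilde k$, whereas the paper writes the same inequality in $d\theta$ with the factorization $h/\tilde k=(h/\sqrt{k\tilde h})(\sqrt{k\tilde h}/\tilde k)$ and then converts to $d\sigma$; after the change of variables these are literally the same inequality.
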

\begin{proof}
\begin{align}
	\mathcal{L}^{2} &= \left( \int \frac{h}{\tilde k}d\theta\right)^{2} =\left( \int \frac{h}{\sqrt{k\tilde h}}\frac{\sqrt{k\tilde h}}{\tilde k}d\theta\right)^{2}\\
&=\int  \frac{h^{2}}{\tilde h k} d\theta \int \frac{k\tilde h}{\tilde k^{2}}d\theta
= \int \frac{h^{2}}{{\tilde h}^{2}}d\sigma \int \frac{k^{2}}{\tilde{k}^{2}}d\sigma
\end{align}
using the inequality

$\int \left(\frac{h}{\tilde h}\right)^2d\sigma \le \frac{\mathcal{L}A}{\alpha} $ on the right hand side and simplifying we obtain

$
\int \frac{k^{2}}{\tilde{k}^{2}}d\sigma\ge \alpha \frac{\mathcal{L}}{A}
$
which when plugged in to  equation \eqref{E:derivativeofIP} proves that the isoperimetric ratio is non-increasing. 
\end{proof}

We have reduced the problem of showing that the isoperimetric ratio is non-increasing to finding a point $\mathcal{O}$   for basing the support functions where \eqref{E:supportsqintminkowski1} is satisfied.
\subsection{Relative positive center}

\begin{definition}	A \textbf{relative positive center} $\mathcal{O}$  of a convex boundary curve $\p K$ is a point where the relative support function based at $\mathcal{O}$ satisfies $\mathcal{B}\left(\frac{h(\theta(\sigma))}{\tilde h(\theta(\sigma))}\right)\ge 0$ for each point $\sigma$ on the boundary curve $\p K$.
\end{definition}

\begin{theorem}\label{T:supportsqintminkowski2}If  a relative positive center $\mathcal{O}$ exists for $\p K$ then
the relative support function $\left(\frac{h}{\tilde h}\right)$ based at $\mathcal{O}$ satisfies
\begin{equation}
\int \left(\frac{h}{\tilde h}\right)^2d\sigma \le \frac{\mathcal{L}A}{\alpha}\notag
\end{equation} 
\end{theorem}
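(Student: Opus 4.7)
The plan is to mimic exactly the Euclidean argument from Section~\ref{S:positivecenter}: integrate the pointwise inequality $\mathcal{B}(h/\tilde h)\ge 0$ against the correct measure on $\p K$ and read off the desired bound. The only subtlety is choosing the measure and identifying the resulting integrals in terms of the geometric quantities $\mathcal{L}$, $A$, and $\alpha$.

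First I would write out
\[
\mathcal{B}\!\left(\tfrac{h}{\tilde h}\right)=\tfrac{h}{\tilde h}\mathcal{L}-A-\alpha\tfrac{h^{2}}{\tilde h^{2}}\ge 0
\]
at every point of $\p K$ (this is the hypothesis of a relative positive center) and integrate with respect to the Minkowski arc length $d\sigma$, which is the natural measure because $d\sigma$ is the weight that turns the Euclidean formulas into Minkowski formulas throughout Section~\ref{S:basicidentiesminn}. That yields
\[
0\le \mathcal{L}\int\tfrac{h}{\tilde h}\,d\sigma\;-\;A\int d\sigma\;-\;\alpha\int\!\left(\tfrac{h}{\tilde h}\right)^{2}d\sigma,
\]
so the whole task reduces to evaluating $\int d\sigma$ and $\int (h/\tilde h)\,d\sigma$.

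Next I would compute these two integrals using the identities in Section~\ref{S:basicidentiesminn}. The first is immediate: $\int d\sigma=\mathcal{L}$ by definition of Minkowski length. For the second, I would use $d\sigma=\tilde h\,ds$ to collapse the $\tilde h$ in the denominator:
\[
\int \tfrac{h}{\tilde h}\,d\sigma=\int h\,ds=\int h(h+h'')\,d\theta=\int(h^{2}-h'^{2})\,d\theta = V(K,K)=2A,
\]
where I integrated by parts (periodicity kills the boundary term) and recognized the Euclidean mixed volume. This is the standard fact $A=\tfrac12\int h\,ds$ translated into the support-function language of the paper.

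Substituting these two evaluations back into the inequality above gives
\[
0\le 2A\mathcal{L}-A\mathcal{L}-\alpha\int\!\left(\tfrac{h}{\tilde h}\right)^{2}d\sigma = A\mathcal{L}-\alpha\int\!\left(\tfrac{h}{\tilde h}\right)^{2}d\sigma,
\]
and dividing by $\alpha>0$ yields the claimed bound $\int(h/\tilde h)^{2}d\sigma\le \mathcal{L}A/\alpha$. No step is really an obstacle here; the only place to be careful is the book-keeping around $\int h\,ds=2A$ versus $\int h\,d\sigma$, since the paper has a few places where the Euclidean/Minkowski weights interact and the factor-of-two convention for $\alpha$ (discussed in the remark right after the mixed-volume formulas) must match. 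Getting that matching right is what makes the $-A\mathcal{L}$ and $+2A\mathcal{L}$ terms combine cleanly into $A\mathcal{L}$, which is the Minkowski analog of the Euclidean simplification $2LA-AL=LA$ in Section~\ref{S:positivecenter}.
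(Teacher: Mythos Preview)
Your proof is correct and is essentially the same as the paper's: both integrate the pointwise inequality $\mathcal{B}(h/\tilde h)\ge 0$ over $\p K$ against the Minkowski arc length, identify $\int (h/\tilde h)\,d\sigma=\int h\,ds=2A$ and $\int d\sigma=\mathcal{L}$, and simplify $2A\mathcal{L}-A\mathcal{L}$ to $A\mathcal{L}$. The only cosmetic difference is that the paper writes the measure as $\tfrac{\tilde h}{k}\,d\theta$ before converting to $ds$ and $d\sigma$, whereas you work with $d\sigma$ from the start.
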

\begin{proof}
\begin{align}
0&\le \int_{\p K} B\left(\frac{h}{\tilde h}\right )\frac{\tilde h}{k}d\theta\\ \notag
&=\int_{\p K}\frac{h}{\tilde h}\frac{\tilde h}{k}\mathcal{L}-\frac{\tilde h}{k}A-\left(\frac{h}{\tilde h}\right)^{2}\frac{\tilde h}{k}\alpha d\theta\\ \notag
&\int_{\p K}\mathcal{L}\frac{h}{k}-A\frac{\tilde h}{k}-\frac{h^2}{\tilde h k}\alpha d\theta\\ \notag
&= \mathcal{L}\int  h ds -A \int \tilde h ds -\frac{h^2}{\tilde h k}\alpha d\theta\\ \notag
&= 2\mathcal{L}A-A\mathcal{L}- \frac{h^2}{\tilde h k}\alpha d\theta\\ 
&= \mathcal{L}A -\alpha \int \left(\frac{h}{\tilde h}\right)^2 d\sigma
\end{align}
\end{proof}

So now showing that the isoperimetric ratio is non-increasing has been reduced to  showing that a relative positive center  exists.

\subsection{The symmetric curve case}
If the curve $\p K$ is symmetric through the origin the existence of a relative positive center is obvious.  Choose $\mathcal{O}$ to be the origin then $\p K$, inradius isoperimetrix $\rin \mathcal{T}$ and the outradius isoperimetrix $\rout \mathcal{T}$ are all centered at the origin and $\rin \mathcal{T}\subseteq K \subseteq \rout \mathcal{T}$ form a minimal annulus around $\p K$.  Hence $\rin\le\frac{h}{\tilde h}\le\rout$ and the original Santalo-Bonnesen proof shows that $\mathcal{B}(\frac{h}{\tilde h})\ge 0$ for all points on $\p K$. (See Theorem \ref{T:MinkowskiBonnesenInequality} in the next section.)

We have shown via theorem \ref{T:supportsqintminkowski2} and equation \ref{E:derivativeofIP} that  the Minkowski isoperimetric ratio for a symmetric curve is non-increasing under the   weighted curvature flow which corresponds to the isoperimetrix. Using results from \cite{Gage1984} and \cite{Angenent1991} one can show that the flow shrinks the curve to a point and unless the curve is the boundary of the isoperimetrix, the isoperimetric ratio is strictly decreasing and approaches $4\alpha$ as the area of $K$ approaches $0$


Our next task  is to show that there is an integral geometry approach similar to the Euclidean geometry that we introduced in section 1 of this paper which will work for Minkowski geometries.  The Poincare and Blaschke formulas for Minkowski geometries are not as well known as their Euclidean versions but once the definitions and formulas of Minkowski integral geometry are properly aligned with the Euclidean terminolgy the proofs are very similar.

\subsection{Integral geometric measure for Minkowski geometry} 
Since rotation is not a Minkowski geometry isometry in most cases , the appropriate invariant integral geometric measure is dxdy, without the rotation variable $d\theta$. It is called ``translative integral geometry'' by Schneider, (p254 \cite{Schneider1993}) but was used by
many previous authors including \cite{Chakerian1962} and \cite{Miles1974} -- supplemented by \cite{Firey1977} and is also described in footnotes in \cite{Santalo1976}

\subsection{Crofton formula for Minkowski geometry}
Sections 1 and 2 of \cite{Chakerian1962} give an excellent introduction to the Minkowski unit disk $\mathcal{U}$ (called the Indicatrix by Chakerian), the dual isoperimetrix $\mathcal{T} $and the relation between them.  He also gives a density for lines and a Crofton formula which we will not need for this proof.  See also \cite{Gage1994} for a similar discussion in the context of Minkowski curvature flow.

\subsection{Poincare formula for Minkowski geometry}
We do need the  Poincare formula for the boundary intersection of two convex bodies $K_{0}$ and $K_{1}$.

A reference from \cite{Santalo1976} (footnotes on p124) gives
``a poincare formula when only translations are taken into account'' which says that 
\begin{equation}\label{E:dxdypoincare}
\int_{\partial K_{0}\cap\partial K_{1}} n dxdy = 2\int_{\partial K_{0}\cap\partial K_{1}}[h_{1}(\theta)+h_{1}(\theta+\pi)]ds_{0}=4(F_{01}+F_{01}^{*})
\end{equation}
where in Santalo's notation $F_{01}$ is  \textbf{half} the mixed area of $K_{0}$ and $K_{1}$  and $F_{01}^{*}$ is half the mixed area of $K_1$ with $K_{0}^*$  which is $K_0$ reflected  through the origin.  
Taking $K_{0}=r\mathcal{T}$, noting that $\mathcal{T}$ remains unchanged when reflected through the origin and applying the formula \eqref{E:mixedvolume} for Minkowski length  
 we obtain 
\begin{equation}\label{E:dxdypoincare2}
\int n dxdy =4r\mathcal{L}
\end{equation}

\subsection{Blaschke formula a.k.a the kinematic formula}

See \cite{Santalo1976} (p124 footnotes)

\begin{equation}\label{E:dxdyblaschke}
 \int _{K_{0}\cap K_{1}\ne\emptyset}  dP_{1}=F_{0}+F_{1}+2F_{01}
 \end{equation}

where all of the $F$'s are \textbf{half} mixed areas, e.g. $F_{0}$ and $F_{1}$ are the  areas of $K_{0}$
and $K_{1}$ while $F_{01}=\frac12 V(K_{0} ,K_{1})$.   (\cite{Santalo1976} (see p 4 for definitions and notations). 
Taking $K_0=r\mathcal{T}$, using our notation and observing that $\nu$, the number of intersections of the interiors, is either $1$ or $0$ because these are convex sets, 

\eqref{E:dxdyblaschke} becomes
\begin{equation}\label{E:dxdyblaschke2}
 \int _{r\mathcal{T}\cap K_{1}\ne\emptyset} \nu dP_{1}=r^{2}\alpha+A+r\mathcal{L}
\end{equation}

See also \cite{Miles1974} and \cite{Firey1977}.

\subsection{Determining the interval on which the Minkowski Bonnesen functions $\mathcal{B}$ is positive}\hfill

We define the Minkowski version of the Bonnesen functional as 
\begin{equation}\label{E:dxdyBonnesenfunctional}
\mathcal{B}(r)=r\mathcal{L}-A- r^{2}\alpha
\end{equation}

and by subtracting \eqref{E:dxdyblaschke2} from \eqref{E:dxdypoincare2}and  we see that the functional $B$ can be represented by the integral geometric formula
\begin{equation}\tag{Minkowski Bonnesen functional}
\int_{r\mathcal{T} \cap K_{1}\ne\emptyset} (\frac{n}{2}- \nu)dxdy = \mathcal{B}(r)=r\mathcal{L}-A-r^{2}\alpha \label{E:dxdybonneseneq}
\end{equation} 
 \begin{proof} of formula \eqref{E:dxdybonneseneq}
 \begin{align}
\int_{r\mathcal{T}\cap K_{1}\ne\emptyset} \frac{n}{2} dxdy 
=& 2r\mathcal{L}\\
\int_{r\mathcal{T}\cap K_{1}\ne\emptyset} \nu dxdy
=& r^{2}\alpha +A + r\mathcal{L} 
\end{align}
\end{proof}

This is nearly identical to the integral geometric representation of the Bonnesen functional in the first section of this paper.  For Euclidean geometry $\alpha$ becomes $\pi$, the area of the unit disk.

\begin{theorem} [The Minkowski Bonnesen inequality]
\label {T:MinkowskiBonnesenInequality}
$\mathcal{B}(r)\ge 0$ for all $r\in [\rin,\rout]$

As explained in the introduction \ref{T:bonnesenInequalityIntro} it follows easily from this that $\mathcal{L}^{2}-4\alpha A \ge \alpha^{2}(\rout - \rin)^{2}$.
\end{theorem}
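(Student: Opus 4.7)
The plan is to run the same Santalo-style case analysis that proved Theorem~\ref{T:BonnesenInequality}, only now using the integral representation just established in \eqref{E:dxdybonneseneq}. Since $\mathcal{T}$ is centrally symmetric and convex and $K$ is convex, almost every translate of $r\mathcal{T}$ meets $\partial K$ transversally, so the intersection number $n$ of $\partial(r\mathcal{T})$ with $\partial K$ is an even integer for $dxdy$-almost every translation. The idea is then to show that for $r\in[\rin,\rout]$ the pointwise integrand $\tfrac{n}{2}-\nu$ is nonnegative outside a set of measure zero, and to invoke this directly.

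First I would dispose of the two endpoint boundary cases. By definition of $\rin$ as the largest $r$ for which some translate of $r\mathcal{T}$ fits inside $K$, any strict translate of $r\mathcal{T}$ with $r>\rin$ fails to be contained in $K$; the positions where $r\mathcal{T}\subseteq K$ form a closed set whose interior is empty when $r=\rin$, so the contribution of such positions to the $dxdy$-integral is zero. Dually, by definition of $\rout$ as the smallest $r$ for which some translate of $r\mathcal{T}$ contains $K$, for $r<\rout$ no translate contains $K$, and again at $r=\rout$ the set of containing positions has planar measure zero. Thus for every $r\in[\rin,\rout]$, the configurations $(n=0,\nu=1)$ with either $r\mathcal{T}\subseteq K$ or $r\mathcal{T}\supseteq K$ occur only on a $dxdy$-null set.

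Next I would handle the generic position. For all remaining positions with $\nu=1$, the interiors of $r\mathcal{T}$ and $K$ meet but neither contains the other; since both bodies are convex, each connected component of $\partial K\setminus\partial(r\mathcal{T})$ that lies in the intersection must enter and exit the disk, so $\partial(r\mathcal{T})\cap\partial K$ is nonempty and, by the transversality observation above, $n\ge 2$ almost surely. When $\nu=0$ the bodies are disjoint and $n=0$, contributing zero. Hence $\tfrac{n}{2}-\nu\ge 0$ almost everywhere on the domain of integration, and integrating against $dxdy$ gives
\begin{equation*}
\mathcal{B}(r)=\int_{r\mathcal{T}\cap K\ne\emptyset}\left(\tfrac{n}{2}-\nu\right)dxdy\;\ge\;0.
\end{equation*}

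The second assertion is then purely algebraic: $\mathcal{B}$ is a downward-opening quadratic in $r$, nonnegativity on the interval $[\rin,\rout]$ forces $\rin$ and $\rout$ to lie between the two real roots of $\mathcal{B}$, and applying the quadratic formula to $r\mathcal{L}-A-\alpha r^2$ and subtracting the two root bounds yields $\mathcal{L}^2-4\alpha A\ge\alpha^2(\rout-\rin)^2$ exactly as sketched in the introduction. The only step requiring any care is the measure-zero claim at the endpoints; the main obstacle in principle is ensuring that tangential touches between $\partial(r\mathcal{T})$ and $\partial K$ form a $dxdy$-null set, but this follows from a standard Sard/transversality argument for convex bodies with $\mathcal{T}$ smooth (as assumed in the Minkowski background subsection), so no new machinery is needed.
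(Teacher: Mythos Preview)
Your proof is correct and follows essentially the same Santalo-style argument as the paper: for $r\in[\rin,\rout]$ the moving isoperimetrix $r\mathcal{T}$ can neither contain nor be contained in $K$ (except on a $dxdy$-null set), so whenever $\nu=1$ the boundaries meet in at least two points and the integrand $\tfrac{n}{2}-\nu$ is nonnegative almost everywhere. Your treatment is slightly more explicit than the paper's at the endpoints $r=\rin,\rout$ (arguing that the inner parallel body $K\ominus \rin\mathcal{T}$ has empty interior, and dually for $\rout$), but this is exactly what the paper's phrase ``except for positions of measure~$0$'' is covering.
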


\begin{proof}
 If the moving isoperimetrix $r\mathcal{T}$ has radius $r$ with $r\in[\rin,\rout]$ then the isoperimetrix can neither contain the lamina $K$ bounded by the curve nor be contained in that lamina.  Hence, except for positions of measure $0$, every time the isoperimetrix touches the lamina ($\nu=1$) the boundary of the isoperimetrix must intersect the boundary of the lamina (generically) in two or more places.  This implies that the integrand is always non-negative and that $B(r)\ge0$ for $r\in[\rin,\rout]$.
 This is Santalo's proof that the Minkowski Bonnesen functional $B(r)$ is greater than or equal to $0$  when $r\in  [r_{in},r_{out}]$ and that equality occurs only when   $K$. is the isoperimetrix.
 \end{proof}
 
 Our proof of the strengthened version of this inequality will show that $\mathcal{B}(r)\ge 0$ for $r\in[\rhoin,\rhoout]$, the range between the inner radius of the minimal annulus and the outer radius of the annulus, even though the integrand is sometimes negative, and that therefore the center of the relative minimal annulus is a relative positive center, that is $\mathcal{B}\left(\frac{h(\theta(\sigma))}{\tilde h(\theta(\sigma))}\right)\ge 0$ for all $\sigma \in [\rhoin,\rhoout]$.
 
 It does this by balancing the measure of positions where the isoperimetrix $\rhoin \mathcal{T}$ is contained within  $K$,  which have intersection numbers $n=0$ and $\nu=1$ and the integrand is negative, with positions  where $n\ge 4$ and $\nu=1$ and the integrand is positive. 
This implies that the integral is positve even though the integrand is sometimes negative and therefore $\mathcal{B}(\rhoin)\ge0$.  To the best of my knowledge this balancing approach has not previously been used in this context.  We then do a similar analysis with the isoperimetrix $\rhoout \mathcal{T}$ to prove that $\mathcal{B}(\rhoout)\ge0$. Since $\mathcal{B}$ is convex downard this will complete the proof that $\mathcal{B}\ge 0$ on $[\rhoin,\rhoout]$ in the Minkowski geometry case. 

Before we do that we need a theorem that a Minkowski minimal annulus always exists and again we refer to results in \cite{Gage1993}. 

\begin{theorem}[Minkowski minimal annulus theorem]\label{T:minkowskiminimalannulus} let $K$ be the lamina interior to the convex curve and let $\mathcal{T}$ be the isoperimetrix. We can find a translation and  homotheties of $\mathcal{T}$  so that   
 $$\rho_{in} \mathcal{T} \subseteq K \subseteq \rho_{out} \mathcal{T}$$
  and $\rho_{out}-\rho_{in}$ is minimized and we can prove that in that  case and there are points $a, A, b, B$ on the boundary of $K$ which alternately touch tangentially $\rho_{in}\mathcal{T} $
 ( at $a$ and $b$) and $\rho_{out}\mathcal{T}$ (at $A$ and $B$).  In addition $\rhoin\le \frac{h}{\tilde h} \le \rhoout$.
 \end{theorem}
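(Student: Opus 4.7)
The plan is to follow the approach of \cite{Gage1993}, recasting the annulus problem as a Chebyshev approximation problem for the support function $h = h_K$. First I would observe that a containment $\rhoin\mathcal{T}+v \subseteq K \subseteq \rhoout\mathcal{T}+v$, where $v=(a,b)$ is the common center of the inner and outer isoperimetrices, is equivalent to the pair of pointwise inequalities
\[
\rhoin\,\tilde h(\theta)+a\cos\theta+b\sin\theta \;\le\; h(\theta) \;\le\; \rhoout\,\tilde h(\theta)+a\cos\theta+b\sin\theta,
\]
since the support function of $r\mathcal{T}+v$ is $r\tilde h(\theta)+a\cos\theta+b\sin\theta$. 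Minimizing $\rhoout-\rhoin$ over admissible translations is therefore equivalent to choosing $(a,b)\in\mathbb{R}^2$ to minimize
\[
\operatorname{range}\!\left(\frac{h(\theta)-a\cos\theta-b\sin\theta}{\tilde h(\theta)}\right),
\]
which in turn amounts to finding a best Chebyshev (uniform, range-minimizing) approximation of $h$ by elements of the three-dimensional span of $\{\tilde h(\theta),\cos\theta,\sin\theta\}$, and reading $\rhoin,\rhoout$ off as the min and max of the error over $\theta$.

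Next I would invoke the classical Chebyshev alternation theorem, which asserts that a best approximation from an $n$-dimensional Haar (Chebyshev) system on the circle exists and that the signed error equioscillates at at least $n+1=4$ points. The hypothesis to check is that $\{\tilde h,\cos\theta,\sin\theta\}$ is indeed a Haar system on the periodic interval, i.e.\ that every nontrivial linear combination $c_0\tilde h+c_1\cos\theta+c_2\sin\theta$ has at most two zeros per period. Geometrically this combination is (up to sign) the support function of a translate $c_0\mathcal{T}+(c_1,c_2)$ evaluated at direction $\theta$, and by strict convexity and central symmetry of $\mathcal{T}$ a support function either never vanishes (origin interior to the translate), vanishes on a measure-zero set of at most two tangent directions (origin exterior), or degenerates to a pure shift $c_1\cos\theta+c_2\sin\theta$ which has exactly two zeros. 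Existence of a minimizer is a routine compactness argument, since driving $|v|\to\infty$ forces the error range to blow up.

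Finally I would translate the four alternation points back into geometry. An equioscillation maximum of the normalized error at $\theta_i$ corresponds exactly to a tangential contact between $\p K$ and the outer scaled isoperimetrix $\rhoout\mathcal{T}+v$ (the supporting lines in direction $\theta_i$ coincide), while a minimum corresponds to a tangential contact with the inner $\rhoin\mathcal{T}+v$. The Haar property also forces maxima and minima to alternate around the circle, yielding two outer contacts $A,B$ and two inner contacts $a,b$ in alternating cyclic order, as claimed. The concluding bound $\rhoin\le h/\tilde h\le \rhoout$ is then immediate after translating so that the origin sits at the common center $\mathcal{O}$ of the two scaled isoperimetrices, since the containment becomes $\rhoin\tilde h\le h\le\rhoout\tilde h$. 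The main obstacle, and the one nontrivial point, is verifying the Haar property on the periodic interval with $\tilde h$ replacing the usual constant function of the trigonometric Chebyshev system; this is precisely what is carried out in \cite{Gage1993} Lemma 5.2 using the alternation machinery from \cite{Lorentz1966}, pp.\ 18--27, and I would simply cite that argument rather than reprove it here.
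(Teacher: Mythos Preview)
Your proposal is correct and follows essentially the same approach as the paper: recast the minimal annulus problem as a Chebyshev (range-minimizing) approximation of $h/\tilde h$ by the system $\{1,\cos\theta/\tilde h,\sin\theta/\tilde h\}$, invoke the alternation theorem from \cite{Lorentz1966}, and cite \cite{Gage1993} Lemma~5.2 for the verification of the Haar property. In fact you supply considerably more detail than the paper's own proof, which is little more than the reformulation plus the citation; your added geometric sketch of why $\{\tilde h,\cos\theta,\sin\theta\}$ has the Haar property and your explicit translation of equioscillation points into tangential contacts are consistent with, and flesh out, what the paper leaves to the reference.
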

 Notice that this is identical with the minimal annulus theorem for Euclidean geometry except for using the unit isoperimetrix instead of the unit Euclidean disk. 
\begin{proof}
In  \cite{Gage1993} this theorem is proved by using   approximation theory  applied to the support function. 
It is clear that  the existence of a minimal Euclidean annulus is equivalent to the $L_{0}$ approximation problem of  finding $a$ and $b$ so that the range of 
 $$\frac{h(\theta)}{\tilde h(\theta)}+ \frac{a\cos \theta}{\tilde h(\theta)}+ \frac{b\sin \theta}{\tilde h(\theta)}$$
 is minimized. 
 
 See \cite{Gage1993} Lemma 5.2 for details for completing the proof along with references to G. G. Lorentz's book ``Approximation of Functions'' \cite{Lorentz1966} particularly pages 18-27 on Cheyshev systems of functions. ( Bonnesen's work included a lot of approximation theory and he was probably aware of this connection with geometry.  )  The proof in \cite{Gage1993} actually proves the Minkowski case and deduces the Euclidean case by specifying that the isoperimetrix is a Euclidean disk and that $\tilde h \equiv 1$. 
\end{proof}

 \subsection{Relative positive center}

\begin{theorem}\label{T:relpositivecenter} Every convex curve $\p K$ has at least one relative positive center: the center of the relative minimal annulus enclosing $\p K$.  We will show that $\mathcal{B}(r)\ge0$ for $r\in[\rhoin,\rhoout]$.  And since $\rhoin \le \frac{h}{\tilde h}\le\rhoout$  it follows that 	$\mathcal{B}\left(\frac{h}{\tilde h}\right)\ge 0$ everywhere on $\p K$. The ratio $\frac{h}{\tilde h}$ is called the relative support function.
\end{theorem}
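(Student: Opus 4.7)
The plan is to reduce the theorem to showing the two endpoint inequalities $\mathcal{B}(\rho_{in}) \ge 0$ and $\mathcal{B}(\rho_{out}) \ge 0$, using that $\mathcal{B}(r) = r\mathcal{L} - A - r^2\alpha$ is a concave-down quadratic in $r$. Once both endpoint values are non-negative, concavity gives $\mathcal{B}(r) \ge 0$ on all of $[\rho_{in},\rho_{out}]$, and by Theorem~\ref{T:minkowskiminimalannulus} the relative support function based at the annulus center satisfies $\rho_{in} \le h/\tilde h \le \rho_{out}$, so $\mathcal{B}(h/\tilde h) \ge 0$ pointwise on $\partial K$ as required.

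To prove $\mathcal{B}(\rho_{in}) \ge 0$, I would work with the translative integral-geometric representation $\mathcal{B}(\rho_{in}) = \int_{\rho_{in}\mathcal{T} \cap K \ne \emptyset} \bigl(\tfrac{n}{2} - \nu\bigr)\,dx\,dy$. Fix the annulus center as the origin and use the four alternating contact points $a,b$ (on $\rho_{in}\mathcal{T}$) and $A,B$ (on $\rho_{out}\mathcal{T}$) from Theorem~\ref{T:minkowskiminimalannulus}. At $a$ and $b$, draw the common support lines of $K$ and $\rho_{in}\mathcal{T}$, then draw the parallel lines through the origin. These four lines divide the plane into strip regions and cut out two triangular regions that are congruent under the point reflection $x \mapsto -x$, which is a symmetry of $\mathcal{T}$ by the standing assumption that the isoperimetrix is centrally symmetric. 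On the ``green'' triangle, any translate of $\rho_{in}\mathcal{T}$ whose center lies there must have its boundary crossed twice by each of the two supporting lines of $K$ at $a$ and at $b$, forcing $n \ge 4$ and integrand $\ge 1$. On the antipodal ``red'' triangle, the worst case is $n=0$, $\nu=1$ with integrand $-1$. Elsewhere, either $\rho_{in}\mathcal{T}$ exits the outer boundary $\rho_{out}\mathcal{T}$ and so cannot be contained in $K$, forcing the integrand to be $\ge 0$, or it misses $K$ entirely (integrand $=0$). Central symmetry of $\mathcal{T}$ ensures the green and red triangles have equal Lebesgue measure, so the integral is non-negative.

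For $\mathcal{B}(\rho_{out}) \ge 0$ I would run the dual argument. The negative contributions come from positions where $\rho_{out}\mathcal{T}$ fully contains $K$ (so $n=0,\nu=1$), which requires the translate to enclose $\rho_{in}\mathcal{T}$ and both contact points $A$ and $B$; this constrains the center to a region near the annulus center bounded by the two arcs $\{x : A \in x + \rho_{out}\mathcal{T}\}$ and $\{x : B \in x + \rho_{out}\mathcal{T}\}$, all contained in a ``red'' triangle cut by the common support lines of $K$ and $\rho_{out}\mathcal{T}$ at $A$ and $B$ together with their parallels through the origin. The antipodal ``green'' triangle, under the same point-reflection symmetry of $\mathcal{T}$, is contained in the region where the center is too far from both $A$ and $B$ for the translate to enclose them but the translate still engulfs $\rho_{in}\mathcal{T}$; then the boundary of $\rho_{out}\mathcal{T}$ is forced to cross each of the four arcs of $\partial K$ between successive contact points, giving $n \ge 4$. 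Outside the two triangles the integrand is again non-negative for the same containment reasons as before.

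The main obstacle is the careful bookkeeping of the balancing regions in the Minkowski setting. In the Euclidean case the congruence of the green and red regions is visually transparent from the rotational symmetry of disks, but here $\mathcal{T}$ is generally not rotationally symmetric; only the point-reflection symmetry $\mathcal{T} = -\mathcal{T}$ is available, so I must take care that every geometric claim above uses only translations and this single reflection. In particular, proving that a translate of $\rho_{in}\mathcal{T}$ with center in the green triangle genuinely meets $\partial K$ in at least four points (as opposed to tangentially, which would be a measure-zero event) requires checking that a translate of $\mathcal{T}$ crossing a support line of $K$ at a non-tangent point must have its boundary cross that line twice, and tracking the four arcs in order around $\partial K$ using convexity. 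Once this bookkeeping is established, the symmetry $\mathcal{T} = -\mathcal{T}$ does all the remaining work and the proof closes exactly in parallel with the Euclidean argument.
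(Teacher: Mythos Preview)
Your proposal is correct and follows essentially the same approach as the paper: reduce to the two endpoint inequalities via concavity of $\mathcal{B}$, then establish $\mathcal{B}(\rho_{in})\ge 0$ and $\mathcal{B}(\rho_{out})\ge 0$ via the translative integral-geometric representation and the ``averaging trick'' balancing a green region (where $n\ge 4$) against a congruent red region (where $n$ may be $0$), the congruence coming precisely from the central symmetry $\mathcal{T}=-\mathcal{T}$. Your emphasis that only the point-reflection symmetry of $\mathcal{T}$ is used, and your care in noting that the curved cap of each region is an arc of $\partial\bigl((\rho_{out}-\rho_{in})\mathcal{T}\bigr)$ (outside of which the integrand is automatically non-negative), match the paper's argument exactly.
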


We will  prove these lemmas using the  integral geometric representation of the Bonnesen functional  \eqref{E:dxdybonneseneq} above)   and the balancing arguments used for the Euclidean plane. Because of the convexity of the Bonnesen functional we only need to prove that $\mathcal{B}(\rhoin)\ge0$ and that $\mathcal{B}(\rhoout)\ge0$ which is done in the subsequent lemmas.

 The only facts used in the Euclidean balancing proof are the convexity of the objects, the inner and outer boundaries of the annulus, the four points of contact and their tangent lines. The actual shape of the annulus boundary or the lamina boundary are needed only to help with the intuition.  Hence the wording for the proof in the Minkowski case is the same as for the Euclidean case, only the images are different ---  boundaries of convex bodies instead of circles.  
 
  \newpage
 In the next two figures the inner and outer boundary of the relative minimal annulus are shown along with the four contact points and their tangent lines, but the lamina itself is not drawn.  The lines through the center of the annulus parallel to the tangent lines are drawn as is the curve $\partial \left((\rhoout-\rhoin) \mathcal{T}\right)$ outside of which the integrand is guaranteed to be non-negative, either because $n\ge2$ or $n=\nu=0$. ( Compare with the proofs in the Euclidean case where the lamina is drawn to add additional intuition.)

 \begin{lemma}
 {$\mathcal{B}(\rho_{in)}\ge0$)}
 \end{lemma}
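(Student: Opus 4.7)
The plan is to carry over the Euclidean argument from Theorem \ref{T:poscenter} essentially verbatim, relying on the integral geometric representation \eqref{E:dxdybonneseneq} of $\mathcal{B}$ together with the central symmetry of $\mathcal{T}$ through the center $\mathcal{O}$ of the relative minimal annulus. The only features of the Euclidean unit disk that the earlier proof exploits are convexity, central symmetry through $\mathcal{O}$, and the existence of a common supporting line with $K$ at the inner contact points; all three are shared by $\mathcal{T}$ under Theorem \ref{T:minkowskiminimalannulus}, so replacing circles by homothets of $\mathcal{T}$ in the diagrams should suffice.

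First I would place $\mathcal{O}$ at the origin so that $\rho_{in}\mathcal{T} \subseteq K \subseteq \rho_{out}\mathcal{T}$ with inner contact points $a, b$ and outer contact points $A, B$ alternating around $\partial K$. At each of $a, b$ I draw the common supporting line of $K$ and $\rho_{in}\mathcal{T}$ together with the parallel line through $\mathcal{O}$. These four lines, intersected with the set $(\rho_{out}-\rho_{in})\mathcal{T}$, bound two congruent triangular regions: $R_+$ on the $A$-side of $\mathcal{O}$ and $R_-$ on the opposite side. The central symmetry of $\mathcal{T}$ makes the map $p \mapsto -p$ a Lebesgue-measure-preserving bijection $R_+ \leftrightarrow R_-$.

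The geometric core is to verify the sign of the integrand $\tfrac{n}{2}-\nu$ of $\mathcal{B}(\rho_{in})$ on each region of position space. On $R_+$ the translate $p + \rho_{in}\mathcal{T}$ straddles both tangent lines, and since each tangent supports $K$ from outside, the part of $p + \rho_{in}\mathcal{T}$ beyond the tangent is strictly exterior to $K$ while the remainder still meets $K$; hence $\partial(p+\rho_{in}\mathcal{T})$ crosses $\partial K$ at least twice near $a$ and twice near $b$, yielding $n \ge 4$ and integrand $\ge 1$. On $R_-$ the worst case is full containment of $p + \rho_{in}\mathcal{T}$ in $K$, giving integrand $-1$, but the reflection $p \mapsto -p$ pairs each such position against one in $R_+$ contributing $\ge +1$, so $R_+$ and $R_-$ together contribute non-negatively. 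For $p$ outside $(\rho_{out}-\rho_{in})\mathcal{T}$ the translate cannot lie inside $\rho_{out}\mathcal{T}$, hence cannot lie inside $K$, so $\nu=1$ forces $n \ge 2$ and the integrand is $\ge 0$. On the remaining positions inside $(\rho_{out}-\rho_{in})\mathcal{T}$ the center $p$ lies in at most one tangent strip, and the same straddling argument applied to the single strip gives $n \ge 2$ whenever $\nu=1$; otherwise $n=\nu=0$, and again the integrand is $\ge 0$.

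Summing over all four cases and using the central-symmetry bijection $R_+ \leftrightarrow R_-$ to cancel the negative contributions in $R_-$, the integral $\mathcal{B}(\rho_{in})$ comes out non-negative. The main obstacle I anticipate is making the straddling argument genuinely yield four distinct crossings on $R_+$; as in the Euclidean case, the configurations where two crossings coalesce (notably $p$ on the arc of $\partial R_+$ where $p+\rho_{in}\mathcal{T}$ passes through $A$) form a one-dimensional set of planar measure zero and so contribute nothing to the integral, and this is the only place where a small measure-theoretic remark is needed.
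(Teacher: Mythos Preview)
Your proposal is correct and follows essentially the same approach as the paper: both use the translative integral-geometric representation of $\mathcal{B}(\rho_{in})$, the tangent lines at the inner contact points $a,b$ together with their parallels through $\mathcal{O}$, the disk $(\rho_{out}-\rho_{in})\mathcal{T}$ to bound the locus where containment can occur, and the central symmetry of $\mathcal{T}$ to pair the region where the integrand may equal $-1$ with a congruent region where $n\ge 4$. Your treatment is slightly more explicit than the paper's in separating out the ``one-strip'' sectors inside $(\rho_{out}-\rho_{in})\mathcal{T}$ and noting $n\ge 2$ there, but this is a difference of exposition rather than of method.
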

 \begin{proof}
\begin{figure}[h]
\begin{center}
\includegraphics[scale=0.5]{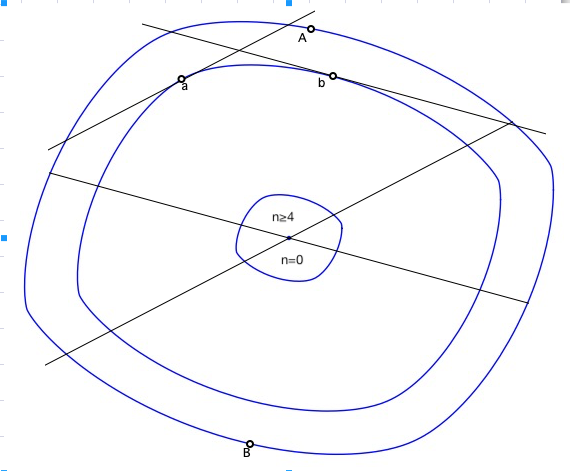}
\caption{averaging trick for the figure of radius $\rho_{in}$}
\label{default}
\end{center}
\end{figure}
 
We are considering the locus of the center of a moving Minkowski disk congruent to $\rhoin\mathcal{T}$ (this would be the center of a circle of radius $\rhoin$ in the Euclidean case).  The only configurations of interest are ones where the disk intersects the fixed lamina ($\nu=1$) - in all other cases $\nu=n=0$ and the integrand is zero.  As in the Euclidean case we need to insure that the area of the locus of the center when $n\ge4$ is larger than the area of the locus where $n=0$.

Considering Fig1 and Fig 5 we see that the situation is very similar, since both the Euclidean and Minkowski disk are convex sets, and we have the 4 points $a, A, b, B$ where the inner and outer disks of making up the annulus are tangent to the fixed lamina.
These in turn create two lines through the origin parallel to the tangent lines. 

In one of the sectors created by these lines through the origin the centers are such that $\rho_{in}\mathcal{T}$ will touch both the tangent lines and provide 4 points of intersection. As in the Euclidean case you can verify that these points are distinct, except for  exceptional points where the center is on the boundary of the inner body and the moving disk touches the outer boundary of the annulus (at $A$ and perhaps other points). This region is lower dimensional and won't contribute to the integral. 

The region where $n=0$ is symmetric through the origin to the region where $n\ge4$ so the net contribution to the integral is 
non-negative just as in the Euclidean case.  Here we are using the fact that the isoperimetrix is symmetric through the origin to insure that the ``caps'' on the two regions are the congruent. 
\end{proof}
\newpage 
 \begin{lemma}{$\mathcal{B}(\rho_{out)}\ge0$)}
\end{lemma}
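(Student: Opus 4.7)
The plan is to run the same translative integral geometry argument used in the preceding lemma for $\mathcal{B}(\rhoin)\ge 0$, but with the roles of \emph{enclosed} and \emph{enclosing} configurations of the moving isoperimetrix reversed, exactly mirroring the Euclidean $\rho_{out}$ proof in Section $B(\rho_{out})\ge0$. We again analyze $\int\bigl(\tfrac{n}{2}-\nu\bigr)\,dx\,dy$ over translates $p+\rhoout\mathcal{T}$, noting that the integrand is nonzero only where this translate meets $K$.

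First I would locate the \textbf{bad} positions, i.e.\ centers $p$ with $\nu=1$ and $n=0$. Since both $K$ and $\mathcal{T}$ are convex, this forces $p+\rhoout\mathcal{T}\supseteq K$. Because $K\supseteq\rhoin\mathcal{T}$ (centered at the annulus center $\mathcal{O}$) and $A,B\in\partial K$, such $p$ must satisfy $p+\rhoout\mathcal{T}\supseteq\rhoin\mathcal{T}$ and $A,B\in p+\rhoout\mathcal{T}$. Using the central symmetry $-\mathcal{T}=\mathcal{T}$ and comparison of support functions, these are equivalent to $p\in(\rhoout-\rhoin)\mathcal{T}$, $p\in A+\rhoout\mathcal{T}$, and $p\in B+\rhoout\mathcal{T}$ respectively. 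So the bad region is confined to a curved region $R_{\mathrm{bad}}\subseteq(\rhoout-\rhoin)\mathcal{T}$ lying on the $A,B$ side of the two lines through $\mathcal{O}$ parallel to the tangents to $\partial K$ at $A$ and $B$.

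Next I would set up the balancing \textbf{good} region. Let $R_{\mathrm{good}}$ be the point-reflection of $R_{\mathrm{bad}}$ through $\mathcal{O}$. By translation-invariance of $dx\,dy$ and central symmetry of $\mathcal{T}$, $R_{\mathrm{good}}$ has the same Lebesgue measure as $R_{\mathrm{bad}}$ and lies in the opposite sector of $(\rhoout-\rhoin)\mathcal{T}$. For $p\in R_{\mathrm{good}}$ the translate $p+\rhoout\mathcal{T}$ still contains $\rhoin\mathcal{T}$, and in particular covers the inner contact points $a,b$; however $p$ lies on the far side of the two parallel lines from $A$ and $B$, so $p+\rhoout\mathcal{T}$ does not contain $A$ or $B$. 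On each of the four arcs of $\partial K$ from an inner contact ($a$ or $b$) to an outer contact ($A$ or $B$), the boundary $\partial(p+\rhoout\mathcal{T})$ must therefore separate the covered endpoint from the uncovered one and cross $\partial K$ at least once, giving $n\ge 4$ and integrand $\ge 1$ on $R_{\mathrm{good}}$.

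Finally I would collect terms. Outside $(\rhoout-\rhoin)\mathcal{T}$ the translate $p+\rhoout\mathcal{T}$ cannot contain $\rhoin\mathcal{T}$ and hence cannot contain $K$, so either $\nu=0$ (integrand $0$) or $\nu=1$ with $n\ge2$ by convexity (integrand $\ge0$). Inside $(\rhoout-\rhoin)\mathcal{T}$, the contributions of $R_{\mathrm{bad}}$ (integrand $\ge-1$) and $R_{\mathrm{good}}$ (integrand $\ge+1$) sum to something nonnegative by the congruence just noted, while the remaining sectors again contribute nonnegatively. This yields $\mathcal{B}(\rhoout)\ge 0$. The step I expect to be the main obstacle is the transverse-intersection count on $R_{\mathrm{good}}$: one must verify that the four crossings on the four arcs are generically distinct and really contribute $n\ge 4$ to the integrand, rather than being tangential or coinciding; this will follow from the same generic-position argument as in the $\rhoin$ lemma, using only convexity of $K$ and $\mathcal{T}$ together with the fact that the exceptional configurations form a set of $dx\,dy$-measure zero.
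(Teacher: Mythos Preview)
Your proposal is correct and follows essentially the same balancing argument as the paper: confine the bad centers (where $p+\rhoout\mathcal{T}\supseteq K$) to the sector of $(\rhoout-\rhoin)\mathcal{T}$ on the $A,B$ side of the lines through $\mathcal{O}$ parallel to the tangents at $A,B$, reflect through $\mathcal{O}$ using the central symmetry of $\mathcal{T}$, and verify $n\ge 4$ on the reflected region via the four arcs of $\partial K$ joining $\{a,b\}$ to $\{A,B\}$. The only cosmetic difference is that the paper balances the two congruent \emph{triangular} caps (bad region $\subseteq$ one cap, the other cap $\subseteq$ good region), whereas you reflect the exact bad locus itself; both versions rest on the same support-function computation $A\in p+\rhoout\mathcal{T}\Rightarrow\langle p,\theta_A\rangle\ge 0$ and yield the same conclusion.
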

\begin{proof}
 \begin{figure}[h]
\begin{center}
\includegraphics[scale=0.5]{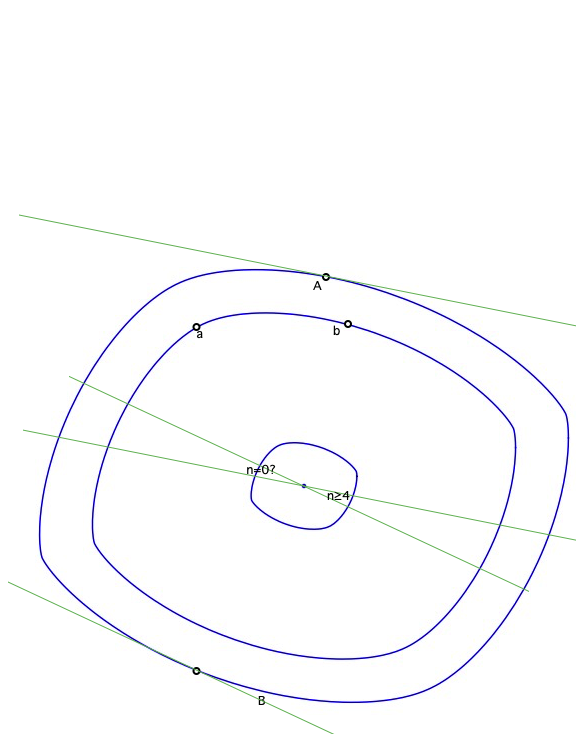}
\caption{averaging trick for the figure of radius $\rho_{out}$}
\label{default}
\end{center}
\end{figure}

The story is  similar for determining the intersections of the moving $\rho_{out}\mathcal{T}$ ``disk''.  In this case the region where $n$ might be $0$ is the one containing centers of moving disks that might include the entire lamina and would therefore include $A$ and $B$.  These centers, as in the Euclidean case, need to lie the sector where their isoperimetrix  includes both $A$ and $B$ and  therefore the center must lie in the sector where its isoperimetrix  touches the tangent lines through $A$ and $B$. 

The region congruent through the origin is one containing centers where the disk can't reach the outer tangent line and therefore can't completely contain the fixed lamina.  Since the moving disk still encloses the inner annulus there are arcs of the boundary of the fixed lamina from $A$ to $a$ and to $b$ which must intersect the boundary of the moving isoperimetrix at least once. Likewise there are two arcs from $B$ to $b$ and to $a$ which intersect the boundary of the moving circle.  Hence $n\ge 4$ and the integrand is on this region is greater than or equal to $1$. 

We also consider positions where the center of the moving disk lies outside the circle of radius $\rho_{out}-\rho_{in}$.  These disks must have points which lie outside the outer boundary of the annulus so they can't be contained within the lamina. If they don't intersect the lamina at all then $n=0$ and $\nu=0$ and the integrand is zero and their contribution to the integral is $0$.  If they do intersect the lamina   then the boundaries of the lamina and the disk intersect and the integrand $\frac{n}{2}-\nu$ is at least $0$ and so they contribute non-negatively to the integral. 

 Once again the argument is identical to the Euclidean case since both the Euclidean disk and the Minkowski disk are convex bodies.    
\end{proof}

\begin{proof}  of theorem \ref{T:relpositivecenter}

Having shown that $B(\rhoin)$ and $B(\rhoout)$ are both non-negative it is follows that $B(r)\ge 0$ for $ r\in[\rhoin,\rhoout]$ hence $B(\frac{h}{\tilde h})\ge0$ on $\p K$ which means that the center of the relative minimum annulus is a positive center.
\end{proof}

Observe that  once the Minkowski geometry terminolgy has been defined in a way that aligns with the Euclidean geometry terms,  the proofs  for Minkowski geometry follow the original Euclidean proofs very closely.

The results of this theorem, but not the method of proof, are   the same as theorem 5.1 of \cite{Gage1993} where the cut, flip, paste trick was used. The theorem also appears as theorem 1 in \cite{Peri-Wills-Zucco1993} using the cut, flip, paste trick.

 The proof using the Santalo integral geometry method and balancing regions where  the integrand is negative with regions where the integrand is positive is, I believe,   new.

\section{Bibliography}

 \end{document}